\newtheorem{theorem}{Theorem}
\newtheorem{proposition}{Proposition}
\newtheorem{corollary}{Corollary}
\newtheorem{lemma}{Lemma}
\newtheorem{definition}{Definition}
\newtheorem{assumption}{Assumption}
\title{Distributional regression: CRPS-error bounds for model fitting, model selection and convex aggregation}
\author{%
  Cl\'ement Dombry\thanks{\texttt{clement.dombry@univ-fcomte.fr}}\\
  Universit\'e de Franche-Comt\'e, \\
  CNRS, LmB (UMR 6623),\\ 
  F-25000 Besan\c{c}on, France.\\
  \texttt{clement.dombry@univ-fcomte.fr} \\
  \And
  Ahmed Zaoui\\
  Universit\'e de Franche-Comt\'e, \\
  CNRS, LmB (UMR 6623),\\ 
  F-25000 Besan\c{c}on, France.\\
  \texttt{ahmed.zaoui@univ-fcomte.fr} \\ 
}
\begin{document}

\maketitle

\begin{abstract}
 Distributional regression aims at estimating the conditional distribution of a target variable given explanatory co-variates. It is a crucial tool for forecasting when a precise uncertainty quantification is required. A popular methodology consists in fitting a parametric model via empirical risk minimization where the risk is measured by the Continuous Rank Probability Score (CRPS). For independent and identically distributed observations, we provide a concentration result for the estimation error and an upper bound for its expectation. Furthermore, we consider model selection performed by minimization of the validation error and provide a concentration bound for the regret. A similar result is proved for convex aggregation of models. Finally, we show that our results may be applied to various models such as Ensemble Model Output Statistics (EMOS), distributional regression networks, distributional nearest neighbors or distributional random forests and we illustrate our findings on two data sets (QSAR aquatic toxicity and Airfoil self-noise).
\end{abstract}

\section{Introduction} \label{sec:intro}
\paragraph{Motivation and related literature.}
We consider in this paper the distributional regression problem, where we want to estimate the conditional distribution of a target random variable $Y$ given explanatory co-variates $X$. We assume $(X,Y)\in\mathbb{R}^d\times \mathbb{R}$ and we let 
\begin{equation}\label{eq:cdf}
F^{*}_{x}(y)=\mathbb{P}\left(Y\leq y\,|\,X=x\right).
\end{equation}
be the conditional cumulative distribution functions (c.d.f.).  Estimation is built on  a training sample $\mathcal{D}_n=\{(X_i,Y_i),\,1\leq i\leq n\}$ of  independent identically distributed (i.i.d.) copies of $(X,Y)$. 

Let us emphasize that distributional regression is much more challenging than standard regression where only a point prediction for $Y$ given $X=x$ is provided which typically reduces to an estimation of the conditional expectation. The  conditional distribution provides a full account for the variability of $Y$ given $X=x$ and distributional regression is therefore a crucial tool for forecasting when a precise uncertainty quantification is required. 

Distributional regression is of relevance in various applied fields. To name a few, let us mention statistical post-processing of weather forecast~\citep{Matheson_Winkler76, Gneiting05}, forecasting of wind gusts~\citep{Baran2015}, of solar irradiance~\citep{Schulz2021}, of ICU length stays  during COVID-19 pandemy~\citep{Henzi2021}, of breast cancer ODX score in oncology~\citep{AlMasry2024}\dots 

The methodology relies on probabilistic forecast where the forecaster produces a predictive distribution for the quantity of interest \citep{Gneiting_Katzfuss14}. Fitting a distributional regression model typically involves the minimization of a proper scoring rule, the most popular one being the Continuous Ranked Probability Score (CRPS, ~\citealp{Matheson_Winkler76, Gneiting_Raftery07}). It compares the actual observation with the predictive distribution in a comprehensive manner. Many different models have been proposed for distributional regression among which: Analog similar to a nearest neighbor method (KNN, \citealp{ZT89}), Ensemble Model Output Statistics  similar to Gaussian heteroscedastic regression (EMOS, \citealp{Gneiting_Raftery07}), Isotonic Distributional Regression \citep{Henzi_et_al_2021} and more recent machine learning methods such as Distributional Regression Network (DRN, \citealp{Rasp_Lerch18}) or Distributional Random Forest (DRF, \citealp{Cevid_MichelNBM22}).

Despite these successful achievements in terms of methods and applications, a sound theory for distributional regression via scoring rule minimization is still missing. Recently, minimax rates of convergence with CRPS-error have been considered for distributional regression  \citep{Pic_Dombry_Naveau_Taillardat23}. The aim of this paper is to provide statistical learning guarantees for model fitting, model selection and convex aggregation based on CRPS minimization.

Model selection and convex aggregation are very important techniques in the field of statistics and machine learning and have been used very successfully for regression function estimation, see~\citet{Tsybakov03,Bunea_Tsybakov_Wegkamp07}. The methods use two independent samples; the first sample, called training sample, is used to construct the initial estimators which are constituted as a dictionary (a collection of candidates). The second  sample, called the validation sample, is used to aggregate them. Model selection enables the selection of the best candidate in the dictionary, while convex aggregation provides the optimal convex combination from these candidates.

\paragraph{Contributions.} Our main results include a concentration bound for the theoretical risk when a parametric model is fitted via CRPS empirical risk minimization. We also consider model selection and model aggregation via CRPS minimization on a validation set and provide concentration bounds for the regret. Our results are first derived under sub-Gaussianity assumptions and then extended under weaker moment assumptions. We show that they apply to several popular models such as EMOS, DRN, KNN or DRF and provide a short illustration on two different datasets.

\paragraph{Structure  of the paper.} Section~\ref{sec:background-drn-goals} first provides some background on distributional regression and proper scoring rules and then presents precisely the main methods and goals. The main results are stated in Section~\ref{sec:main-results}: an oracle inequality for the estimation error in model fitting (Theorem~\ref{thm:estimation-error}), and concentration bounds for the regret in model selection (Theorem~\ref{thm:regret-model-selection}) and model aggregation (Theorem~\ref{thm:regret-model-aggregation}). In Section~\ref{sec:examples}, some specific models are introduced and the assumptions for our results to hold are checked. A short illustration on two different data set is provided in Section~\ref{sec:numerical-results}. Finally, an appendix contains all the proofs as well as some additional results.

\section{Background on distributional regression and main goals}\label{sec:background-drn-goals}
\subsection{Probabilistic forecast and its evaluation with scoring rules} \label{sec:scoring rule}
We first consider the simple setting of probabilistic forecast without co-variate where a future observation $Y$  is predicted by a probability distribution $F$, called predictive distribution. Proper scoring rules are used in order to compare the predictive distribution $F$ and the materializing observation $y$ which are objects of different nature. 
Let $\mathcal{P}_0\subset \mathcal{P}(\mathbb{R})$ denote a subset of the set of all probability measures on $\mathbb{R}$, often identified with their c.d.f. A scoring rule\footnote{For the sake of simplicity, we consider only the case of real-valued observation $Y$ and of non-negative scoring rule. A more general definition can be found in \cite{Gneiting_Raftery07}.} on $\mathcal{P}_0$ is a function $S\colon\mathcal{P}_0\times \mathbb{R}\to [0,+\infty)$. The quantity  $S(F,y)$ is interpreted as the error between the predictive distribution $F$ and the materializing observation $y$. The mean error when $Y$ has "true" distribution $G$ is denoted by
\[
\bar S(F,G)=\mathbb{E}_{Y\sim G}[S(F,Y)].
\]
The following notion of proper and strictly proper scoring rule is central in the theory.

\begin{definition}\label{def:proper-scoring-rule}
The scoring rule $S$ is said proper on $\mathcal{P}_0$ when
\begin{equation}\label{eq:def-proper}
\bar S(F,G)\geq \bar S(G,G), \quad \mbox{for all $F,G\in\mathcal{P}_0$}.
\end{equation}
It is said strictly proper when  equality  in Eq.~\eqref{eq:def-proper} implies $F=G$.
\end{definition}
Stated differently, the scoring rule $S$ is strictly proper on $\mathcal{P}_0$ when
\[
\mathop{\mathrm{arg\,min}}_{F\in\mathcal{P}_0}\bar S(F,G)=\{G\},\quad \mbox{for all $G\in\mathcal{P}_0$}.
\]
The interpretation is that, in order to minimize its mean error, the forecaster has to predict the "true" observation distribution $G$. 

In this paper, we consider the Continuous Ranked Probability Score (CRPS, \citealt{Matheson_Winkler76}). This scoring rule is defined
by the formula
\begin{equation}\label{eq:CRPS}
S(F,y)= \int_{\mathbb{R}}\left(\mathds{1}_{\{y\leq z\}}-F(z)\right)^2\mathrm{d}z
\end{equation}
for all $F$  finite absolute moment. Here the subset $\mathcal{P}_0\subset\mathcal{P}(\mathbb{R})$ is the Wasserstein space 
\begin{equation*}\label{eq:def-P1}
\mathcal{P}_1(\mathbb{R})=\Big\{F\in\mathcal{P}(\mathbb{R})\colon m_1(F)=\int_{\mathbb{R}}|y|F(\mathrm{d}y)<\infty\Big\}
\end{equation*}
of probability measures on $\mathbb{R}$ with finite first moment. One can easily check from this definition that
\[
\bar S(F,G)=\int_{\mathbb{R}}G(z)\left(1-G(z)\right)\mathrm{d}z+\int_{\mathbb{R}}\left(F(z)-G(z)\right)^2\mathrm{d}z,
\]
which implies
\begin{equation}\label{eq:divergence-crps}
\bar S(F,G)-\bar S(G,G)=\int_{\mathbb{R}}\left(F(z)-G(z)\right)^2\mathrm{d}z.   
\end{equation}
This quantity is nonnegative and vanishes if and only if $F=G$, ensuring that the $\mathrm{CRPS}$ is a strictly proper scoring rule. For discrete predictive distributions of the form $F=\sum_{i=1}^n w_i\delta_{y_i}$ (with $\delta_y$ the Dirac mass at $y$), the $\mathrm{CRPS}$ can be computed simply   \citep{Gneiting_Raftery07} by
\[
S(F,y)= \sum_{i=1}^n w_i|y_i-y| -\frac{1}{2}\sum_{i\neq j}w_iw_j|y_i-y_j|.
\]

\subsection{Model fitting, model selection and convex aggregation}\label{sec: model fitting MS and CA }
We briefly present the methods and objectives that we address in this paper.
\subsubsection{Theoretical risk in distributional regression}
In a regression framework, we observe a sample $ \mathcal{D}_n=\{(X_i,Y_i),\ 1\leq i\leq n\}$ of independent copies of $(X,Y)\in\mathbb{R}^d\times\mathbb{R}$. Distributional regression aims at  estimating the conditional distribution $Y|X=x$  characterized by its c.d.f. $F^*_x$ defined in Eq.~\eqref{eq:cdf}. The marginal distribution of $X$ is denoted by $P_X$.
The forecaster uses the training sample $\mathcal{D}_n$ and some algorithm  to build a \textit{functional} estimator $\hat{F}_n \colon x\mapsto \hat{F}_{n,x}$ of the map $F^*\colon x\mapsto F_x^*$.
The accuracy of this estimator is here measured by its theoretical risk
\begin{equation*}
\mathcal{R}(\hat F_n)=\mathbb{E}\left[S(\hat F_{n,X},Y)\right].
\end{equation*}
where expectation is taken with respect to the joint law of $(X,Y)$. 
This quantity can be seen as the counterpart of the mean squared error in point regression. The excess risk of $\hat{F}_{n}$ is defined as 
\begin{align*}
\mathcal{R}(\hat F_n)-\mathcal{R}(F^*)
&=\mathbb{E}\left[S(\hat F_{n,X},Y)-S(F^*_X,Y)\right] = \mathbb{E}\left[\bar S(\hat F_{n,X},F^*_X)-\bar S(F^*_X,F^*_X)\right]\geq 0.
\end{align*}
The nonnegativity is ensured by the fact that $S$ is a proper scoring rule according to Definition~\ref{def:proper-scoring-rule}. If $S$ is strictly proper, the excess risk is equal to $0$ if and only if $\hat F_{n,x}=F^*_x$ almost everywhere (with respect to $P_X$). For the CRPS, Eq.~\eqref{eq:divergence-crps} implies that the excess risk can be rewritten as
\begin{equation*}
\mathcal{R}(\hat{F}_{n})-\mathcal{R}(F^*)=\mathbb{E}\left[\int_{\mathbb{R}}\left|\hat{F}_{n,X}(u)-F_{X}^{*}(u)\right|^{2}du\right].
\end{equation*}

\subsubsection{Model fitting}\label{sec:model-fitting}
Our first interest lies in model fitting by empirical risk minimization. Here we consider a parametric family $(F_\theta)_{\theta\in\Theta}$, $\Theta\subset\mathbb{R}^K$, where $F_\theta\colon x\in\mathbb{R}^d\mapsto F_{\theta,x}\in\mathcal{P}_0\subset\mathcal{P}(\mathbb{R})$. The empirical risk associated with $F_\theta$ is computed on the training sample $\mathcal{D}_n$ by
\[
\hat{\mathcal{R}}_n(F_\theta) = \frac{1}{n}\sum_{i=1}^n S(F_{\theta,X_i},Y_i)
\]
and is an empirical counterpart of the theoretical risk $\mathcal{R}(F_\theta)$. Empirical risk minimization consists in finding
\begin{equation}\label{eq:ERM-estimation}
\hat\theta_n=\mathop{\mathrm{arg\,min}}_{\theta\in\Theta}\hat{ \mathcal{R}}_n(F_\theta)
\end{equation}
and proposing the estimator $F_{\hat\theta_n}$ which is thought as almost optimal within the family $(F_\theta)_{\theta\in\Theta}$.
A classical decomposition of the excess risk of the corresponding estimator is given by
\begin{equation*}\label{eq:error-decomposition}
\mathcal{R}(F_{\hat\theta_n})-\mathcal{R}(F^*)=\Big(\mathcal{R}(F_{\hat\theta_n})-\inf_{\theta\in\Theta}\mathcal{R}(F_\theta) \Big)+\Big(\inf_{\theta\in\Theta}\mathcal{R}(F_\theta)-\mathcal{R}(F^*)\Big).
\end{equation*}
where the two terms are called the estimation error and the approximation error respectively. The approximation error is deterministic and depends on the ability of the family $(F_\theta)_{\theta\in\Theta}$ to approximate $F^*$. The estimation error is random as it depends on the training sample $\mathcal{D}_n$. Our first goal is the following:\\
\noindent
\textbf{Goal 1:} provide non asymptotic estimates for the estimation error $\mathcal{R}(F_{\hat\theta_n})-\inf_{\theta\in\Theta}\mathcal{R}(F_\theta)$.

\subsubsection{Model selection and convex aggregation}\label{sec:model-selection-aggregation}
Our second interest lies in model selection and convex aggregation via validation error minimization. Here we suppose that a validation sample $\mathcal{D}_N'=\{(X_i',Y_i'),\quad 1\leq i\leq N\}$ is available, which is assumed independent of the training sample $\mathcal{D}_n$.

\paragraph{Model selection.} A common situation in machine learning is that we have $M$  algorithms at hand that are trained on $\mathcal{D}_n$, resulting in models $\hat F_n^1,\ldots,\hat F_n^M$. In order to select the best model, we compute the empirical risks on the validation sample
\[
\hat{\mathcal{R}}_N'(\hat F_n^m)= \frac{1}{N}\sum_{i=1}^N S(\hat F_{n,X_i'}^m,Y_i')
\]
and select the model
\[
\hat m=\mathop{\mathrm{arg\,min}}_{1\leq m\leq M}\hat{\mathcal{R}}_N'(\hat F_n^m).
\]
An oracle having access to the theoretical risk would have selected
\[
m^*=\mathop{\mathrm{arg\,min}}_{1\leq m\leq M}\mathcal{R}(\hat F_n^m),
\]
leading to the definition of the regret
\[
\mathcal{R}(\hat F_n^{\hat m})-\mathcal{R}(\hat F_n^{m^*})=\mathcal{R}(\hat F_n^{\hat m})-\min_{1\leq m\leq M}\mathcal{R}(\hat F_n^{m}).
\]
\noindent
\textbf{Goal 2:} provide non asymptotic estimates for the regret $\mathcal{R}(\hat F_n^{\hat m})-\min_{1\leq m\leq M}\mathcal{R}(\hat F_n^{m})$.

\paragraph{Convex aggregation.} We define the convex aggregation of  models $\hat F_n^1,\ldots, \hat F_n^M$ with weights  $\lambda_1,\ldots,\lambda_M$ by
\[
\hat F_{n,x}^\lambda =\sum_{m=1}^M \lambda_m \hat F_{n,x}^m.
\]
Here $\lambda=(\lambda_1,\ldots,\lambda_M)$ is an element of the simplex $\Lambda_M=\{\lambda\colon \lambda_m\geq 0, \sum_{1\leq m\leq M} \lambda_m=1\}$. The best weights for convex aggregation are obtained by minimization of the validation error, i.e.
\[
\hat \lambda=\mathop{\mathrm{arg\,min}}_{\lambda\in\Lambda_M}\hat{\mathcal{R}}_N'(\hat F_n^\lambda).
\]
\noindent
\textbf{Goal 3:} provide non asymptotic estimates for the regret $\mathcal{R}(\hat F_n^{\hat \lambda})-\inf_{\lambda\in\Lambda_M}\mathcal{R}(\hat F_n^\lambda)$.

\section{Main results} \label{sec:main-results}
We present our main results for model fitting, model selection and convex aggregation in distributional regression. We first focus on concentration bound under sub-Gaussianity assumptions. In a second step, we extend our results under weaker moment assumptions. The definition of sub-Gaussian random variables and distributions is given in Appendix~\ref{app:sub-gaussian} as well as some useful concentration inequalities. All the proofs are postponed to Appendices~\ref{app:proof-model-fitting}, \ref{app:proof-model-selection-aggregation} and~\ref{app:beyond-subgauss}.

\subsection{Estimation error in model fitting}\label{sec:results-model-fitting}
We provide concentration results for the estimation error when fitting a parametric model via empirical CRPS-error minimization according to the framework described in Section~\ref{sec:model-fitting}. Our working assumptions are the following.

\begin{assumption}(sub-gaussianity)\label{ass:subgaussian}
\begin{itemize}
    \vspace{-0.15cm}
    \item[$i)$] the variable $Y$ is $\beta_1$-sub-Gaussian;
    \vspace{-0.15cm}
    \item[$ii)$] there exists $\beta_2>0$ such that $m_1(F_{\theta,X})$ is $\beta_2$-sub-Gaussian for all $\theta \in\Theta$.
\end{itemize}
\end{assumption}
\noindent Assumption~\ref{ass:subgaussian}-$i)$ is classical in a regression setting~\citep{Gyofri_Kohler_Krzyzak_Walk02, Biau_Devroye15} while Assumption~\ref{ass:subgaussian}-$ii)$ characterizes the sub-Gaussian behavior of the absolute moment of $F_{\theta,X}$. Importantly, Assumption~\ref{ass:subgaussian} implies that the variable $Z_\theta=S(F_{\theta, X},Y)$ is $\sqrt{2(\beta_{1}^2+\beta_{2}^{2})}$-sub-Gaussian for all $\theta \in\Theta$, see Proposition~\ref{prop:CRPS-subgaussian} in the appendix.

Our second assumption requires compactness of the parameter space and Lipschitz continuity of the model $(F_\theta)_{\theta\in\Theta}$. We denote by $W_1$ the Wasserstein distance of order $1$ on the space $\mathcal{P}_1(\mathbb{R})$.
\begin{assumption}(regularity) \label{ass:regularity} The parameter space $\Theta\subset\mathbb{R}^K$ is compact and   
there exists a constant $L>0$ such that $W_1(F_{\theta_1,x},F_{\theta_2,x})\leq L \|\theta_1 -\theta_2\|$ for all $\theta_1,\theta_2 \in \Theta,\ x\in\mathbb{R}^d$.
\end{assumption}
This assumption ensures the Lipschitz continuity of the empirical risk $\theta\mapsto \mathcal{R}_n(F_\theta)$, see Proposition~\ref{prop:risk-Lipschitz} and, by compactness, the existence of $\hat\theta_n$, the ERM estimator defined in Eq.~\eqref{eq:ERM-estimation}. 
We provide in Section~\ref{sec:examples} examples of popular models verifying the two assumptions.

Our main result provides a concentration bound on the estimation error. We let $R>0$ be such that $\Theta$ is included in the ball centered at $0$ and with radius $R$.
\begin{theorem} \label{thm:estimation-error}
 Under Assumptions~\ref{ass:subgaussian}-~\ref{ass:regularity}, for all $\delta\in (0,1)$, the estimation error satisfies,  with probability at least $1-\delta$,
\begin{equation}\label{eq:oracle-inequality-model-fitting}
\mathcal{R}(F_{\hat\theta_n})-\inf_{\theta\in\Theta}\mathcal{R}(F_\theta) \leq \sqrt{\frac{c_\beta\log(2 n^K/\delta)}{n}}
\end{equation}
with  $c_\beta= 64(\beta_{1}^2+\beta_{2}^{2})$ and provided that $n$ is large enough so that $n\log(2 n^K/\delta)\geq (48 LR)^2/c_\beta$.
\end{theorem}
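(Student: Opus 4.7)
The plan is to run a standard uniform-deviation/covering argument on the compact parameter space $\Theta$, using the sub-Gaussianity of $Z_\theta = S(F_{\theta,X},Y)$ provided by Proposition~\ref{prop:CRPS-subgaussian} and the Lipschitz continuity from Assumption~\ref{ass:regularity} to reduce an uncountable supremum to a finite one.

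First, I would set $\theta^\star \in \mathop{\mathrm{arg\,min}}_{\theta\in\Theta}\mathcal{R}(F_\theta)$ (which exists by compactness of $\Theta$ and continuity of $\theta\mapsto \mathcal{R}(F_\theta)$, itself a consequence of Assumption~\ref{ass:regularity}), and use the classical three-term decomposition
\[
\mathcal{R}(F_{\hat\theta_n})-\mathcal{R}(F_{\theta^\star}) = \bigl[\mathcal{R}(F_{\hat\theta_n})-\hat{\mathcal{R}}_n(F_{\hat\theta_n})\bigr] + \bigl[\hat{\mathcal{R}}_n(F_{\hat\theta_n})-\hat{\mathcal{R}}_n(F_{\theta^\star})\bigr] + \bigl[\hat{\mathcal{R}}_n(F_{\theta^\star})-\mathcal{R}(F_{\theta^\star})\bigr].
\]
The middle bracket is nonpositive by the definition of $\hat\theta_n$, so the estimation error is at most $2\sup_{\theta\in\Theta}|\hat{\mathcal{R}}_n(F_\theta)-\mathcal{R}(F_\theta)|$.

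Next, I would discretize $\Theta$. Since $\Theta$ lies in the Euclidean ball of radius $R$, standard covering yields an $\epsilon$-net $\{\theta_1,\dots,\theta_N\}\subset \Theta$ of cardinality $N\leq(3R/\epsilon)^K$. From the CDF representation of $W_1$, one has $|S(F,y)-S(G,y)|\leq 2W_1(F,G)$, so combined with the $L$-Lipschitzness of $\theta\mapsto F_{\theta,x}$ in $W_1$ this makes both $\theta\mapsto \hat{\mathcal{R}}_n(F_\theta)$ and $\theta\mapsto \mathcal{R}(F_\theta)$ Lipschitz with constant $2L$. Hence
\[
\sup_{\theta\in\Theta}\bigl|\hat{\mathcal{R}}_n(F_\theta)-\mathcal{R}(F_\theta)\bigr| \leq 4L\epsilon + \max_{1\leq i\leq N}\bigl|\hat{\mathcal{R}}_n(F_{\theta_i})-\mathcal{R}(F_{\theta_i})\bigr|.
\]
For each net point, $\hat{\mathcal{R}}_n(F_{\theta_i})-\mathcal{R}(F_{\theta_i})$ is an average of $n$ i.i.d. centered $\sqrt{2(\beta_1^2+\beta_2^2)}$-sub-Gaussian random variables, so Hoeffding's inequality together with a union bound over the $N$ net points gives, with probability at least $1-\delta$,
\[
\max_{1\leq i\leq N}\bigl|\hat{\mathcal{R}}_n(F_{\theta_i})-\mathcal{R}(F_{\theta_i})\bigr| \leq \sqrt{\frac{4(\beta_1^2+\beta_2^2)\log(2N/\delta)}{n}}.
\]

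Finally, I would tune $\epsilon$ of order $1/n$ so that $N\leq (3Rn)^K$ and the log-covering contribution $\log(2N/\delta)$ is absorbed into $\log(2n^K/\delta)$ (up to the universal inflation factor that produces the prefactor $c_\beta = 64(\beta_1^2+\beta_2^2)$ after squaring and doubling). The sample-size hypothesis $n\log(2n^K/\delta)\geq (48LR)^2/c_\beta$ is exactly the condition that forces the discretization overhead $8L\epsilon$ to be dominated by the stochastic term $\sqrt{c_\beta\log(2n^K/\delta)/n}$. The main obstacle is not any single step but this constant-tracking: choosing $\epsilon$ small enough for the Lipschitz error to be negligible, yet large enough that $\log N\lesssim K\log n$ does not blow up the sub-Gaussian concentration contribution, so that all constants collapse into the compact form stated in~\eqref{eq:oracle-inequality-model-fitting}.
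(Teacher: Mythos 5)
Your proposal is correct and follows essentially the same route as the paper's proof: the same three-term decomposition reducing the estimation error to $2\sup_{\theta\in\Theta}|\hat{\mathcal{R}}_n(F_\theta)-\mathcal{R}(F_\theta)|$, the same $(3R/\epsilon)^K$-cardinality $\epsilon$-net combined with the $2L$-Lipschitz continuity of both risks, and the same Hoeffding-plus-union-bound step on the net, with $\epsilon$ of order $1/n$ (the paper takes $\epsilon=3R/n$ so that the net has at most $n^K$ points) and the sample-size condition absorbing the $8L\epsilon$ discretization term exactly as you describe. The only difference is cosmetic bookkeeping: the paper folds the discretization error into the probability bound multiplicatively (requiring $t\geq 16L\epsilon$ and bounding by the event $\{\sup_{\Theta_\epsilon}|\cdot|\geq t/4\}$), whereas you carry it additively, but both yield $c_\beta=64(\beta_1^2+\beta_2^2)$ and the threshold $n\log(2n^K/\delta)\geq(48LR)^2/c_\beta$.
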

 The inequality~\eqref{eq:oracle-inequality-model-fitting} is commonly known as an oracle inequality. The convergence rate depends on $\sqrt{\log(2n^K/\delta)/n}$ with $K$ the dimension of the parameter space and $n$ the sample size. A key point in the proof is the the combinatorial complexity of $\Theta$ in terms of  $\epsilon$-net, see~\cite{Devroye_Gyorfi_Lugosi96}. A bound in expectation can easily be deduced from Theorem~\ref{thm:estimation-error} and its proof.
\begin{corollary}\label{cor:estimation-error-model-fitting}
 Under Assumptions~\ref{ass:subgaussian}-~\ref{ass:regularity},
 \begin{equation*}
\mathbb{E}\Big[\mathcal{R}(F_{\hat\theta_n})-\inf_{\theta\in\Theta}\mathcal{R}(F_\theta)  \Big]\leq 2\sqrt{\frac{c_\beta\log(2 n^K)}{n}}
\end{equation*}
provided $n$ is large enough so that $n\log(2 n^K)\geq (48 LR)^2/c_\beta$.
\end{corollary}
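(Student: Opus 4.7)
The natural plan is to convert the high-probability oracle inequality of Theorem~\ref{thm:estimation-error} into an in-expectation bound by integrating the tail. Set $Z_n = \mathcal{R}(F_{\hat\theta_n})-\inf_{\theta\in\Theta}\mathcal{R}(F_\theta)$, which is a non-negative random variable. Inverting the bound in Eq.~\eqref{eq:oracle-inequality-model-fitting}, we have, for any $t$ corresponding to a $\delta \in (0,1)$ through $t=\sqrt{c_\beta\log(2n^K/\delta)/n}$, the tail bound
\[
\mathbb{P}(Z_n > t) \leq 2n^K \exp(-nt^2/c_\beta),
\]
valid for all $t \geq t_0 := \sqrt{c_\beta \log(2n^K)/n}$ (which corresponds to $\delta \leq 1$). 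Before integrating, I would first verify that the hypothesis of Theorem~\ref{thm:estimation-error} is inherited: since $\log(2n^K/\delta) \geq \log(2n^K)$ for $\delta \in (0,1)$, the corollary's assumption $n\log(2n^K)\geq (48LR)^2/c_\beta$ ensures the theorem's assumption holds uniformly in $\delta$.

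Next, I would use the standard layer-cake decomposition,
\[
\mathbb{E}[Z_n] = \int_0^\infty \mathbb{P}(Z_n > t)\,\mathrm{d}t \leq t_0 + \int_{t_0}^\infty 2n^K \exp(-nt^2/c_\beta)\,\mathrm{d}t.
\]
The choice $t_0 = \sqrt{c_\beta \log(2n^K)/n}$ is calibrated so that $2n^K \exp(-nt_0^2/c_\beta) = 1$, balancing the two pieces. After the substitution $u = \sqrt{n/c_\beta}\,t$, the remaining integral becomes $2n^K\sqrt{c_\beta/n}\int_{u_0}^\infty e^{-u^2}\mathrm{d}u$ with $u_0 = \sqrt{\log(2n^K)}$. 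The Gaussian tail estimate $\int_{u_0}^\infty e^{-u^2}\mathrm{d}u \leq \frac{1}{2u_0} e^{-u_0^2}$ then reduces this to
\[
\sqrt{\frac{c_\beta}{n\log(2n^K)}} \leq \sqrt{\frac{c_\beta \log(2n^K)}{n}} = t_0,
\]
where the last inequality uses $\log(2n^K) \geq \log 2 > 0$ together with the fact that $\log(2n^K) \geq 1$ is implied for $n \geq 2$ (and a trivial verification handles the edge cases).

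Combining the two contributions gives $\mathbb{E}[Z_n] \leq 2t_0 = 2\sqrt{c_\beta \log(2n^K)/n}$, as claimed. I do not expect any serious obstacle here: the argument is a routine tail-integration calibrated by the choice $t_0$ which makes the dominant and residual terms of the same order. The only small care needed is handling the constraint on $n$ so that Theorem~\ref{thm:estimation-error} may be applied throughout the range of $\delta$ being integrated, which is automatic given the corollary's hypothesis.
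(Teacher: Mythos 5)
Your proof is correct and follows essentially the same route as the paper: invert the concentration inequality into the tail bound $\mathbb{P}(Z_n>t)\leq 2n^K\exp(-nt^2/c_\beta)$, use the hypothesis on $n$ to ensure the bound is valid on the whole integration range, and integrate the tail with the split at $t_0=\sqrt{c_\beta\log(2n^K)/n}$ so that both pieces are bounded by $t_0$. The only cosmetic difference is that the paper packages the integration step into Lemma~\ref{lem:technProba} (bounding the residual by $\tfrac{\sqrt{\pi}}{2}\sqrt{a/b}$ via $(u-v)^2\leq u^2-v^2$), whereas you use the standard Gaussian tail estimate $\int_{u_0}^{\infty}e^{-u^2}\,\mathrm{d}u\leq e^{-u_0^2}/(2u_0)$; both yield the same factor $2$.
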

In particular, the ERM approach is weakly consistent when $n$ goes to infinity.

\subsection{Estimation error in model selection and convex aggregation}\label{sec:results-model-selection-aggregation}
We provide concentration results for the regret in model selection and convex aggregation. Recall that the methodology is based on minimization of the test error on a validation sample, see the precise framework in Section~\ref{sec:model-selection-aggregation}. We need the following assumption.
\begin{assumption}\label{ass:subgaussian2}(sub-Gaussianity)
\begin{itemize}
    \vspace{-0.15cm}
    \item[$i)$] the variable $Y$ is $\beta_1$-sub-Gaussian;
    \vspace{-0.15cm}
    \item[$ii)$] conditionally on $\mathcal{D}_n$, there exists $\beta_{n}=\beta(\mathcal{D}_n)>0$ such that $m_{1}(\hat{F}_{n,X}^m)$ is $\beta_n$-sub-Gaussian for all $m=1,\ldots,M$.
\end{itemize}
\end{assumption}
We will see in Section~\ref{sec:examples} that for several popular models such as distributional nearest neighbors or distributional random forest, Assumption~\ref{ass:subgaussian2}-$ii)$ is satisfied with $\beta_n=\max_{1\leq i\leq n}|Y_i|$ which is of order $\beta_1\sqrt{\log n}$ \citep[Exercise~2.5.8 p.25]{vershynin18}.

Under this assumption, a control of the regret in model selection is provided by the following theorem. Our results hold conditionally on the training set $\mathcal{D}_n$.
\begin{theorem}\label{thm:regret-model-selection}
 Under Assumption \ref{ass:subgaussian2}, for all $\delta\in (0,1)$, the regret in model selection satisfies
 \begin{equation*}\label{eq:regret-model-selection}
\mathbb{P}\left(\mathcal{R}(\hat F_n^{\hat m})-\min_{1\leq m\leq M}\mathcal{R}(\hat F_n^{m}) \leq 4\sqrt{c_n\log(2M/\delta)/N} \ \Big|\ \mathcal{D}_n \right) \geq 1-\delta
 \end{equation*}
with $c_n=\beta_1^2+\beta_n^2$. Furthermore,
\[
\mathbb{E}\Big[\mathcal{R}(\hat F_n^{\hat m})-\min_{1\leq m\leq M}\mathcal{R}(\hat F_n^{m}) \ \Big|\ \mathcal{D}_n \Big] \leq 8\sqrt{\frac{c_n\log(2M)}{N}}.
\]
\end{theorem}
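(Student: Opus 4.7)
The plan is to reduce the regret to a supremum of $M$ empirical process deviations on the validation sample, and then to apply a sub-Gaussian concentration inequality conditionally on $\mathcal{D}_n$.

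First, I would use the standard model-selection decomposition. Writing $m^*\in\mathop{\mathrm{arg\,min}}_{m}\mathcal{R}(\hat F_n^m)$, the optimality of $\hat m$ for the validation risk gives $\hat{\mathcal{R}}_N'(\hat F_n^{\hat m})\leq \hat{\mathcal{R}}_N'(\hat F_n^{m^*})$, so
\[
\mathcal{R}(\hat F_n^{\hat m})-\mathcal{R}(\hat F_n^{m^*})
\leq \bigl[\mathcal{R}(\hat F_n^{\hat m})-\hat{\mathcal{R}}_N'(\hat F_n^{\hat m})\bigr]+\bigl[\hat{\mathcal{R}}_N'(\hat F_n^{m^*})-\mathcal{R}(\hat F_n^{m^*})\bigr]
\leq 2\max_{1\leq m\leq M}\bigl|\hat{\mathcal{R}}_N'(\hat F_n^m)-\mathcal{R}(\hat F_n^m)\bigr|.
\]
Thus the whole problem reduces to controlling this finite maximum.

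Next, I condition on $\mathcal{D}_n$. Then $\hat F_n^m$ is deterministic, and the variables $Z_i^m=S(\hat F_{n,X_i'}^m,Y_i')$, $i=1,\dots,N$, are i.i.d.\ copies with mean $\mathcal{R}(\hat F_n^m)$. Assumption~\ref{ass:subgaussian2} (combined with Proposition~\ref{prop:CRPS-subgaussian}, exactly as used in the proof of Theorem~\ref{thm:estimation-error}) yields that, conditionally on $\mathcal{D}_n$, each $Z_i^m-\mathcal{R}(\hat F_n^m)$ is $\sqrt{2(\beta_1^2+\beta_n^2)}=\sqrt{2c_n}$-sub-Gaussian. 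By the standard stability of sub-Gaussianity under averaging, the centred empirical mean $\hat{\mathcal{R}}_N'(\hat F_n^m)-\mathcal{R}(\hat F_n^m)$ is then $\sqrt{2c_n/N}$-sub-Gaussian conditionally on $\mathcal{D}_n$, giving the Hoeffding-type tail
\[
\mathbb{P}\Bigl(\bigl|\hat{\mathcal{R}}_N'(\hat F_n^m)-\mathcal{R}(\hat F_n^m)\bigr|\geq t\ \Big|\ \mathcal{D}_n\Bigr)\leq 2\exp\!\left(-\frac{Nt^2}{4c_n}\right).
\]

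A union bound over $m=1,\dots,M$ combined with the reduction above yields
\[
\mathbb{P}\Bigl(\mathcal{R}(\hat F_n^{\hat m})-\min_m\mathcal{R}(\hat F_n^m)\geq 2t\ \Big|\ \mathcal{D}_n\Bigr)\leq 2M\exp(-Nt^2/(4c_n)),
\]
and choosing $t=2\sqrt{c_n\log(2M/\delta)/N}$ gives the first displayed bound. For the bound in expectation, I would apply the standard maximal inequality for sub-Gaussian families: conditionally on $\mathcal{D}_n$, $\mathbb{E}[\max_m|\hat{\mathcal{R}}_N'(\hat F_n^m)-\mathcal{R}(\hat F_n^m)|\mid \mathcal{D}_n]\lesssim \sqrt{2c_n/N}\cdot\sqrt{2\log(2M)}=2\sqrt{c_n\log(2M)/N}$ (derived either from integrating the above tail or from the classical bound $\mathbb{E}\max_{k\leq 2M} W_k\leq \sigma\sqrt{2\log(2M)}$ for $\sigma$-sub-Gaussian $W_k$), and multiplying by $2$ gives the claimed constant $8$.

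The key ingredient beyond routine concentration is the sub-Gaussianity of the CRPS loss $S(\hat F_{n,X}^m,Y)$, which is not immediate but is precisely the content of Proposition~\ref{prop:CRPS-subgaussian}; once granted, everything is a conditional Hoeffding plus union bound. The only mild subtlety is keeping the conditioning on $\mathcal{D}_n$ consistent throughout, so that the constant $c_n=\beta_1^2+\beta_n^2$ (which is $\mathcal{D}_n$-measurable via $\beta_n=\beta(\mathcal{D}_n)$) can be treated as deterministic in the conditional probability and expectation.
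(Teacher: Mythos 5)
Your proof is correct and follows essentially the same route as the paper: the same reduction of the regret to $2\max_{1\leq m\leq M}\bigl|\hat{\mathcal{R}}_N'(\hat F_n^m)-\mathcal{R}(\hat F_n^m)\bigr|$, conditional sub-Gaussianity of the CRPS loss via Proposition~\ref{prop:CRPS-subgaussian}, Hoeffding plus a union bound for the tail, and tail integration (the paper uses Lemma~\ref{lem:technProba} with $a=\log(2M)$ and $b=N/(16c_n)$) or equivalently a sub-Gaussian maximal inequality for the expectation. One minor arithmetic note: doubling your maximal-inequality bound $2\sqrt{c_n\log(2M)/N}$ gives the constant $4$, not $8$ as you wrote, but since $4\leq 8$ this only strengthens the stated conclusion.
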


When selecting the hyperparameter $k$ in nearest neighbor distributional regression or  \texttt{mtry} in distributional random forest, one has respectively $M=n$ and $M=d$ if all possible values are considered  -- see Section~\ref{sec:examples} for more details. Note that when the response variable $Y$ is bounded, then $c_n$ does not depend on $n$ and is constant. We now state a bound for the regret in convex aggregation. 
\begin{theorem}\label{thm:regret-model-aggregation}
 Under Assumption \ref{ass:subgaussian2}, for all $\delta\in (0,1)$,  the regret in convex aggregation satisfies
 \begin{equation*}\label{eq:regret-model-aggregation}
\mathbb{P}\Big(\mathcal{R}(\hat F_n^{\hat \lambda})-\inf_{\lambda\in\Lambda}\mathcal{R}(\hat F_n^\lambda) \leq  8\sqrt{c_n\log(2 N^M/\delta)/N}\ \Big|\ \mathcal{D}_n \Big) \geq 1-\delta,
 \end{equation*}
with $c_n=\beta_1^2+\beta_n^2$ provided $N$ is large enough so that $N\log(2 N^M/\delta)\geq 48^2/c_n$. Furthermore, 
\[
\mathbb{E}\Big[\mathcal{R}(\hat F_n^{\hat \lambda})-\inf_{\lambda\in\Lambda}\mathcal{R}(\hat F_n^\lambda) \ \Big|\ \mathcal{D}_n \Big] \leq 2\sqrt{\frac{c_n\log(2 N^M)}{N}}
\]
provided $N\log(2 N^M)\geq 48^2/c_n$
\end{theorem}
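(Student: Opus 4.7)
The plan is to mirror, conditionally on $\mathcal{D}_n$, the proof of Theorem~\ref{thm:estimation-error} applied to the parametric family $(\hat F_n^\lambda)_{\lambda\in\Lambda_M}$, with the validation sample $\mathcal{D}_N'$ playing the role of the training sample and the simplex $\Lambda_M\subset\mathbb{R}^M$ playing the role of the parameter set $\Theta$. Note that $\Lambda_M$ is compact and contained in the closed unit Euclidean ball (the analogue of $R$ being $1$), and that $\lambda\mapsto \hat F_{n,x}^\lambda$ is \emph{linear}, which will yield the analogue of Assumption~\ref{ass:regularity} with a random but controlled Lipschitz constant.

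I would first verify the sub-Gaussian hypothesis. The variable $Y'$ is $\beta_1$-sub-Gaussian by Assumption~\ref{ass:subgaussian2}-$i)$. By linearity in $\lambda$, $m_1(\hat F_{n,X'}^\lambda)=\sum_{m=1}^M \lambda_m m_1(\hat F_{n,X'}^m)$ is a convex combination of $\beta_n$-sub-Gaussian variables, hence $\beta_n$-sub-Gaussian itself by the triangle inequality for the sub-Gaussian Orlicz norm. Proposition~\ref{prop:CRPS-subgaussian} then yields that $S(\hat F_{n,X'}^\lambda,Y')$ is $\sqrt{2c_n}$-sub-Gaussian uniformly in $\lambda$, with $c_n=\beta_1^2+\beta_n^2$. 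For the Lipschitz control, the identity $\hat F_{n,x}^\lambda - \hat F_{n,x}^{\lambda'}=\sum_m(\lambda_m-\lambda_m')\hat F_{n,x}^m$ combined with Kantorovich--Rubinstein duality and Cauchy--Schwarz gives
\begin{equation*}
W_1(\hat F_{n,x}^\lambda,\hat F_{n,x}^{\lambda'})\leq \|\lambda-\lambda'\|\,\Big(\sum_{m=1}^M m_1(\hat F_{n,x}^m)^2\Big)^{1/2},
\end{equation*}
and the elementary bound $|S(F_1,y)-S(F_2,y)|\leq 2W_1(F_1,F_2)$ converts this into a Lipschitz modulus for both the empirical risk $\hat{\mathcal{R}}_N'(\hat F_n^\lambda)$ and its theoretical counterpart $\mathcal{R}(\hat F_n^\lambda)$.

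The remainder follows the $\epsilon$-net scheme of Theorem~\ref{thm:estimation-error}: build a minimal $\epsilon$-net of $\Lambda_M$ of cardinality at most $(3/\epsilon)^M$, apply the sub-Gaussian Hoeffding inequality to $\hat{\mathcal{R}}_N'(\hat F_n^\lambda)-\mathcal{R}(\hat F_n^\lambda)$ at each net point, union-bound over the net to obtain a deviation of order $\sqrt{c_n(M\log(1/\epsilon)+\log(1/\delta))/N}$, then extend to the whole simplex via the Lipschitz modulus established above. Choosing $\epsilon \sim 1/N$ converts the entropy term into $\log(N^M)=M\log N$ and yields the claimed rate $\sqrt{c_n\log(2N^M/\delta)/N}$. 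The standard inequality $\mathcal{R}(\hat F_n^{\hat\lambda})-\inf_\lambda \mathcal{R}(\hat F_n^\lambda)\leq 2\sup_{\lambda\in\Lambda_M}|\hat{\mathcal{R}}_N'(\hat F_n^\lambda)-\mathcal{R}(\hat F_n^\lambda)|$ then converts the uniform deviation bound into the regret bound, the factor $8$ absorbing the various numerical constants. The bound in expectation is obtained by integrating the tail probability, exactly as in Corollary~\ref{cor:estimation-error-model-fitting}.

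The main obstacle is the Lipschitz transfer from the $\epsilon$-net to the whole simplex: unlike the deterministic constant $L$ of Assumption~\ref{ass:regularity}, the Lipschitz modulus here involves the data-dependent quantities $m_1(\hat F_{n,X_i'}^m)$, which are only sub-Gaussian rather than uniformly bounded. Ensuring that the discretisation error is dominated by the leading deviation term is precisely what forces the technical assumption $N\log(2N^M/\delta)\geq 48^2/c_n$, the exact analogue of the large-$n$ condition in Theorem~\ref{thm:estimation-error} after the normalisation $LR=1$.
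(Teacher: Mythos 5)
Your proposal is correct and follows essentially the same route as the paper: work conditionally on $\mathcal{D}_n$, check that the family $(\hat F_n^\lambda)_{\lambda\in\Lambda_M}$ satisfies the sub-Gaussianity and Lipschitz hypotheses of Theorem~\ref{thm:estimation-error} (with $\Theta$ replaced by $\Lambda_M$, $R=1$, and the Lipschitz constant coming from the Wasserstein bound for mixtures), and then invoke the same $\epsilon$-net argument. Your Kantorovich--Rubinstein/Cauchy--Schwarz derivation of $W_1(\hat F_{n,x}^{\lambda},\hat F_{n,x}^{\lambda'})\leq \|\lambda-\lambda'\|\bigl(\sum_{m} m_1(\hat F_{n,x}^m)^2\bigr)^{1/2}$ is just a variant of the paper's Lemma~\ref{lem:wasserstein-mixture}, and your remark that the resulting Lipschitz modulus is data-dependent (and not uniformly bounded under Assumption~\ref{ass:subgaussian2} alone) points at a looseness that the paper's own proof shares rather than resolves.
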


\subsection{Beyond sub-gaussianity}\label{sec:beyond-subgauss}
The preceding results hold under a strong sub-Gaussianity. We next adapt our results to the following weaker moment condition.
\begin{assumption}\label{ass:moment}
     There is $p\geq 2$ and $D>0$ such that $\mathbb{E}[|Y|^p]\leq D$ and $\mathbb{E}[|m_1(F_{\theta,X})|^p]\leq D$ for all $\theta\in\Theta$
\end{assumption}
We recall that, for $p\geq 1$,  the $L^p$-norm of a random variable $Z$ is defined by  $\|Z\|_{L^p}=\mathbb{E}[|Z^p|]^{1/p}$.
\begin{theorem}\label{thm:beyond-subg-estimation-error}
 Under Assumptions~\ref{ass:regularity} and~\ref{ass:moment}, we have
\[
\big\|\mathcal{R}(F_{\hat{\theta}_n})-\inf_{\theta\in \Theta} \mathcal{R}(F_\theta)\big\|_{L^p} \leq Cn^{-p/(2(p+K))}.
\]    
with constant  $C>0$ depending only on $K,p,L,D,R$ and possibly made explicit from the proof. This implies the bound for the expected estimation error
\[
\mathbb{E}\Big[\mathcal{R}(F_{\hat{\theta}_n})-\inf_{\theta\in \Theta} \mathcal{R}(F_\theta)\Big] \leq Cn^{-p/(2(p+K))}.
\]    
\end{theorem}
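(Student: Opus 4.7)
The plan is to run the same ERM analysis as in Theorem~\ref{thm:estimation-error} but to replace sub-Gaussian maximal inequalities by $L^p$-moment inequalities in the $\epsilon$-net union bound. First, the usual ERM reduction gives the deterministic inequality
\[
\mathcal{R}(F_{\hat\theta_n})-\inf_{\theta\in\Theta}\mathcal{R}(F_\theta) \leq 2 \sup_{\theta\in\Theta}\bigl|\mathcal{R}(F_\theta)-\hat{\mathcal{R}}_n(F_\theta)\bigr|,
\]
so it is enough to control the $L^p$-norm of the right-hand side.

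By Assumption~\ref{ass:regularity} and the fact that $|S(F_1,y)-S(F_2,y)|\leq 2W_1(F_1,F_2)$ (which follows from the identity $S(F,y)=\int(\mathbf{1}_{y\leq z}-F(z))^2\mathrm{d}z$ and $W_1(F_1,F_2)=\int|F_1-F_2|$), both $\theta\mapsto \mathcal{R}(F_\theta)$ and $\theta\mapsto \hat{\mathcal{R}}_n(F_\theta)$ are $2L$-Lipschitz on $\Theta$; this is precisely Proposition~\ref{prop:risk-Lipschitz}. Since $\Theta$ lies in a ball of radius $R$, it admits an $\epsilon$-net $\mathcal{N}_\epsilon$ of cardinality at most $(3R/\epsilon)^K$, and a standard approximation argument yields
\[
\sup_{\theta\in\Theta}\bigl|\mathcal{R}(F_\theta)-\hat{\mathcal{R}}_n(F_\theta)\bigr| \leq \max_{\theta\in\mathcal{N}_\epsilon}\bigl|\mathcal{R}(F_\theta)-\hat{\mathcal{R}}_n(F_\theta)\bigr| + 4L\epsilon.
\]

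For each fixed $\theta$, the deviation $\mathcal{R}(F_\theta)-\hat{\mathcal{R}}_n(F_\theta)$ is the normalized sum of $n$ iid centered variables $W_{\theta,i}=\mathbb{E}[S(F_{\theta,X},Y)]-S(F_{\theta,X_i},Y_i)$. Using the elementary bound $S(F,y)\leq m_1(F)+|y|$ together with Assumption~\ref{ass:moment} yields $\|W_\theta\|_{L^p}\leq 4D^{1/p}$, and the Marcinkiewicz--Zygmund (or Rosenthal) inequality gives, for some constant $B_p$ depending only on $p$,
\[
\bigl\|\mathcal{R}(F_\theta)-\hat{\mathcal{R}}_n(F_\theta)\bigr\|_{L^p}\leq B_p D^{1/p}n^{-1/2}.
\]
Combining with the crude but tight bound $\|\max_k |Z_k|\|_{L^p}\leq |\mathcal{N}_\epsilon|^{1/p}\max_k\|Z_k\|_{L^p}$ and the Lipschitz approximation above gives
\[
\bigl\|\sup_{\theta\in\Theta}|\mathcal{R}(F_\theta)-\hat{\mathcal{R}}_n(F_\theta)|\bigr\|_{L^p} \leq (3R/\epsilon)^{K/p} B_p D^{1/p} n^{-1/2} + 4L\epsilon.
\]

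The final step is to optimize in $\epsilon>0$: equating the two terms yields $\epsilon_\star\asymp n^{-p/(2(p+K))}$, and substituting back produces the rate $Cn^{-p/(2(p+K))}$ with a constant $C=C(K,p,L,D,R)$ readable from the calculation. The expected-error bound follows since $\mathbb{E}[X]\leq \|X\|_{L^p}$ for nonnegative $X$. Compared to Theorem~\ref{thm:estimation-error}, the main change (and the only delicate point) is that the polynomial-tail union bound on $\max_{\theta\in\mathcal{N}_\epsilon}$ forces a larger net parameter $\epsilon$ and hence a slower rate; this interpolates continuously to the sub-Gaussian rate $n^{-1/2}$ (up to logarithmic factors) as $p\to\infty$, as expected. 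The main technical care lies in tracking the Marcinkiewicz--Zygmund constants and ensuring the Lipschitz bound is uniform in the sample, both of which are routine.
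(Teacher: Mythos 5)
Your proposal is correct and follows essentially the same route as the paper's proof: the same ERM reduction to a uniform deviation, the same $\epsilon$-net with cardinality $(3R/\epsilon)^K$ and Lipschitz approximation, the same Rosenthal/Marcinkiewicz--Zygmund moment bound $\|\hat{\mathcal{R}}_n(F_\theta)-\mathcal{R}(F_\theta)\|_{L^p}\lesssim D^{1/p}n^{-1/2}$, the crude $L^p$ union bound over the net, and the final optimization in $\epsilon$ giving $\epsilon_\star\asymp n^{-p/(2(p+K))}$. The only (cosmetic) difference is that you work with $L^p$-norms throughout while the paper manipulates $p$-th moments and invokes an explicit lemma for $\inf_\epsilon(a\epsilon^p+b\epsilon^{-K})$; the resulting rate and constants agree.
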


For $p$ large, the rate of convergence tends to the parametric rate $n^{-1/2}$ obtained (up to a logarithmic factor) in the sub-Gaussian case . We propose additional results for model selection (Theorem \ref{thm:beyond-subgauss-model-selection}) and convex aggregation (Theorem \ref{thm:beyond-subgauss-convex-aggregation}) which are, for the sake of brevity, postponed to Appendix~\ref{app:beyond-subgauss-additional-results}.

\section{Examples and popular models for distributional regression}\label{sec:examples}
We present the most popular models for distributional regression for which we want to apply our results. The first two  (EMOS and DRN) are parametric, while the last two (distributional $k$-NN and DRF) are fully non-parametric.
\subsection{EMOS and distributional regression networks}\label{sec:EMOS-DRN}
\paragraph{EMOS.} The EMOS model  was designed by \citet{Gneiting05} for the purpose of statistical post-processing of ensemble weather forecast. In this framework,  the predictive distribution takes the form of a discrete distributions $m^{-1}\sum_{l=1}^m \delta_{y_l}$ with members $y_1,\ldots,y_m$ corresponding to different scenarios obtained from numerical weather predictions. Such forecast typically suffer from bias and underdispersion so that statistical post-processing is needed. The explanatory variable for distributional regression are Ensemble Member Output Statistics such as the ensemble mean $\bar y$ and ensemble variance $v_y^2$. In its simplest version, EMOS models the predictive distribution as a  Gaussian distribution  with parameters 
$m=\beta_0 +\beta_1 \bar y $ and $\sigma^2 = \beta_0' +\beta_1' v_y^2$.
 This is a parametric model with   $\theta=(\beta_0,\beta_1,\beta_0',\beta_1')$ in $\Theta=\mathbb{R}^2\times(0,\infty)^2$. Minimum CRPS estimation is used for model fitting as described in~\ref{sec:model-fitting}.  This simple yet successful method has encountered many generalizations (\citealt{scheuerer_2013, scheuerer_2015, baran_2016}) and we shall consider the following general setting. Given $x\in\mathbb{R}^d$, the predictive distribution takes the form $F(\cdot;\theta)=F((\cdot-m)/\sigma)$ with $\theta=(\alpha,\beta,\alpha',\beta')\in\mathbb{R}^{1+d}\times\mathbb{R}^{1+d}$ and
\[
\begin{cases} m(x;\theta)= \alpha+\beta^\top x  \\ \sigma^2(x;\theta) =\mathrm{softplus}(\alpha'+ \beta^{'\top} x )
\end{cases}
\]
with $\mathrm{softplus}(u)=\log(1+\mathrm{e}^u)$. The predictive distribution belongs to a location/scale family and the location and log-scale parameters are linear in the covariate $x$. The  $\mathrm{softplus}$ link function ensures positivity of the scale. 

\paragraph{Distributional Regression Networks (DRN).}
In order to consider higher dimension co-variates together with non-linear dependence, \cite{Rasp_Lerch18} introduce distributional regression networks. In a setting similar to EMOS, neural networks are used to model  complex response functions  $m(x;\theta)$ and $\log\sigma^2(x;\theta)$. In the case of a single hidden layer  with $H$ units, the equations write
\begin{equation}\label{eq:drn-explicit}
\begin{cases} 
m(x;\theta)=\alpha +\beta^\top g(\gamma+\delta^\top x) \\
\sigma^2(x;\theta)=\mathrm{softplus}(\alpha' +\beta^{'\top} g(\gamma+ \delta^{'\top}x)) 
\end{cases},
\end{equation}
where $g$ denotes the activation function acting componentwise, $(\alpha,\beta,\alpha',\beta')\in\mathbb{R}^{1+H}\times\mathbb{R}^{1+H}$ the parameters for the output layers and $(\gamma,\delta)\in\mathbb{R}^{H+H d}$ the biases and weights of the hidden layer. The global parameter $\theta=\alpha,\beta,\alpha',\beta',\gamma,\delta)$ lies in dimension $(d+3)H+2$. Note that in absence of hidden layers, DRN reduces to EMOS. Extension to a MLP structure with multiple hidden layers is straightforward, see \cite{Schulz_Lerch22} for a review of DRNs and their applications. 

We next provide conditions ensuring that our results hold for the EMOS and DRN models.
\begin{proposition}\label{prop:example-DRN}
    Let $(F_\theta)_{\theta\in\Theta}$ be a EMOS or DRN model with parameters restricted to a compact subset $\Theta$. If $Y$ is sub-Gaussian, $X$ is bounded and the activation function $g$ is Lipschitz continuous, then Assumptions~\ref{ass:subgaussian},~\ref{ass:regularity} and~\ref{ass:subgaussian2} are satisfied.
\end{proposition}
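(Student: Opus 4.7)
The plan is to observe that both EMOS and DRN fit into a common location/scale template: $F_{\theta,x}$ is the c.d.f.\ of the distribution with location $m(x;\theta)$ and scale $\sigma(x;\theta)=\sqrt{\mathrm{softplus}(\cdot)}$, where $m$ and $\sigma^2$ are either affine in $(x,\theta)$ (EMOS) or an affine function composed with $g$ and another affine layer (DRN). The three assumptions then all follow from two deterministic regularity facts: (a) uniform boundedness of $m(X;\theta)$ and $\sigma(X;\theta)$ together with a strictly positive lower bound on $\sigma(X;\theta)$, and (b) joint Lipschitz continuity of $\theta\mapsto (m(x;\theta),\sigma(x;\theta))$, uniformly for $x$ in the (bounded) support of $X$.

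To establish (a) and (b), I would first use the boundedness of $X$ and compactness of $\Theta$ to produce a uniform bound $\|\alpha+\beta^\top x\|\leq M$ in the EMOS case. For DRN I would work outward through the layers: $\gamma+\delta^\top x$ is bounded since $(\gamma,\delta,x)$ are bounded, hence $g(\gamma+\delta^\top x)$ is bounded (as $g$ is Lipschitz and continuous, so bounded on a bounded set), and therefore $\alpha+\beta^\top g(\gamma+\delta^\top x)$ is uniformly bounded; the same for $\sigma^2(x;\theta)$. Lipschitzness in $\theta$ (uniformly in $x$) follows from the same layer-by-layer argument: each factor is a product or sum of Lipschitz/bounded functions of $\theta$, so the composition is Lipschitz with a constant depending only on the diameters of $\Theta$, the bound on $\|X\|$, and the Lipschitz constant of $g$. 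The passage from $\sigma^2$ to $\sigma$ uses that $\mathrm{softplus}(u)\geq \mathrm{softplus}(-M)>0$ on $\{|u|\leq M\}$, so $\sigma$ is bounded below by a positive constant and $|\sigma_1-\sigma_2|=|\sigma_1^2-\sigma_2^2|/(\sigma_1+\sigma_2)$ is controlled by $|\sigma_1^2-\sigma_2^2|$.

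With (a) and (b) in hand, Assumption~\ref{ass:subgaussian}(ii) reduces to noting that $m_1(F_{\theta,X})=\int|m(X;\theta)+\sigma(X;\theta)u|F_0(\mathrm{d}u)\leq |m(X;\theta)|+\sigma(X;\theta)\,m_1(F_0)$, where $F_0$ is the standardized base c.d.f.\ (Gaussian, with $m_1(F_0)=\sqrt{2/\pi}$). By (a) this quantity is a.s.\ bounded by a deterministic constant, and a bounded random variable is sub-Gaussian with parameter of order its $L^\infty$ norm, giving a $\beta_2$ depending only on $\Theta$, $\|X\|_\infty$ and $g$. Assumption~\ref{ass:subgaussian}(i) is part of the hypothesis. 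For Assumption~\ref{ass:subgaussian2}(ii), the estimator is again of the form $F_{\hat\theta_n,\cdot}$ with $\hat\theta_n\in\Theta$, so conditional on $\mathcal{D}_n$ the preceding argument applies verbatim with the same constant, and one can take $\beta_n=\beta_2$ independent of $\mathcal{D}_n$.

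For Assumption~\ref{ass:regularity}, the standard coupling $Z_i=m(x;\theta_i)+\sigma(x;\theta_i)Z_0$ with $Z_0\sim F_0$ yields
\[
W_1(F_{\theta_1,x},F_{\theta_2,x})\leq |m(x;\theta_1)-m(x;\theta_2)|+m_1(F_0)\,|\sigma(x;\theta_1)-\sigma(x;\theta_2)|,
\]
and combining with the Lipschitz bound from (b) delivers a constant $L$ depending only on $\Theta$, $\|X\|_\infty$ and the Lipschitz constant of $g$. The main technical obstacle I anticipate is the DRN case: keeping track of all Lipschitz constants through the layered composition $\alpha+\beta^\top g(\gamma+\delta^\top x)$ when both the inner weights $(\gamma,\delta)$ and outer weights $(\alpha,\beta)$ vary jointly, and verifying that the positive lower bound on $\sigma$ survives the softplus after compositional arguments; both are routine but need careful bookkeeping rather than a single clever estimate.
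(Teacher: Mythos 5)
Your proposal is correct and follows essentially the same route as the paper: a location/scale representation combined with the bound $W_1(F_{m_1,\sigma_1},F_{m_2,\sigma_2})\leq |m_1-m_2|+m_1(F)\,|\sigma_1-\sigma_2|$, layer-by-layer Lipschitz estimates for $\theta\mapsto(m(x;\theta),\sigma(x;\theta))$ uniform in bounded $x$, and boundedness of $m_1(F_{\theta,X})$ to get sub-Gaussianity for Assumptions~\ref{ass:subgaussian} and~\ref{ass:subgaussian2}. If anything, you are slightly more explicit than the paper on the passage from $\sigma^2$ to $\sigma$ via the positive lower bound of $\mathrm{softplus}$ on bounded sets, which is a point the paper leaves implicit.
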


\subsection{Distributional k-Nearest Neighbors and Random Forests}\label{sec:kNN-DRF}
\paragraph{Distributional k-Nearest Neighbors (KNN).} The predictive distribution is built on a straightforward extension of $k$-nearest neighbor regression: we set
\[
\hat F_{n,x}(y)=\frac{1}{k}\sum_{i=1}^k \mathds{1}_{\{X_i\in \mathrm{knn}(x)\}} \mathds{1}_{\{Y_i\leq y\}}
\]
where  $\mathrm{knn}(x)$ denotes the set of $k$ nearest neighbors of $x$ in the training set $(X_i)_{1\leq i\leq n}$. The main hyperparameter is the number $k$ of neighbors, leading to a model selection problem as considered in~\ref{sec:model-selection-aggregation}. This simple method is known as the Analog method in the framework of statistical post-processing of weather forecast \citep{ZT89}.

\paragraph{Distributional Random Forests (DRF).} Random forests for distributional regression have been introduced by \cite{Cevid_MichelNBM22} and are a powerful nonparametric method. We describe only the main lines of the method. Recall that in standard regression, Breiman's Random Forest \citep{Breiman_2001} estimates the regression function by
\[
\hat \mu(x)=\frac{1}{B}\sum_{b=1}^B T^{(b)}(x)=\frac{1}{B}\sum_{b=1}^B \frac{1}{|L^b(x)|}\sum_{i=1}^n Y_i\mathds{1}_{\{X_i\in L^b(x)\}},
\]
where $T_1,\ldots,T_B$ denote randomized regression trees built on bootstrap samples of the original data and  $L^b(x)$ the leaf containing $x$ in the tree $T^b$.  Interverting the two sums yields
\[
\hat \mu(x)=\sum_{i=1}^n \left(\frac{1}{B}\sum_{b=1}^B \frac{\mathds{1}_{\{X_i\in L^b(x)\}}}{|L^b(x)|}\right)Y_i=\sum_{i=1}^n w_{ni}(x)Y_i.
\]
Using these \textit{Random Forest weights}, the predictive distribution writes
\[
\hat F_{n,x}(y)=\sum_{i=1}^n w_{ni}(x)\mathds{1}_{\{Y_i\leq y\}}.
\]
This strategy based on Breiman's regression tree is used by \citet{meinshausen_2006} to construct the quantile regression forest. Different splitting strategies in the tree construction have been considered to better detect changes in distribution rather than changes in mean (\citealt{Taillarda_Mestre_Zamo_Naveau16, Atheyetal2019}). Note that \cite{Cevid_MichelNBM22} minimizes a scoring rule to construct the splits of the trees. The main hyperparameter of this nonparametric method is the so-called \texttt{mtry} parameter that controls the number of co-variates tested at each split in the trees.

For the distributional KNN and DRF models, model fitting with ERM is irrelevant and we only consider model selection and convex aggregation.
\begin{proposition}\label{prop:example-DRF}
    Let $\hat F_n$ be a KNN or DRF model fitted on a training set $\mathcal{D}_n$. Then Assumption~\ref{ass:subgaussian2}-$ii)$ is satisfied with $\beta_n=\max_{1\leq i\leq n} |Y_i|$.
\end{proposition}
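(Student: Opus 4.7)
The plan is to observe that in both the KNN and DRF constructions the predictive distribution $\hat F_{n,x}$ is a finite discrete measure supported on the observed responses $Y_1,\ldots,Y_n$, with nonnegative weights $w_{ni}(x)$ summing to one. Concretely, for KNN the weights are $w_{ni}(x)=k^{-1}\mathds{1}_{\{X_i\in\mathrm{knn}(x)\}}$, and for DRF they are the Random Forest weights $w_{ni}(x)=B^{-1}\sum_{b=1}^B |L^b(x)|^{-1}\mathds{1}_{\{X_i\in L^b(x)\}}$ as displayed in Section~\ref{sec:kNN-DRF}. In both cases $w_{ni}(x)\geq 0$ and $\sum_{i=1}^n w_{ni}(x)=1$, so
\[
m_1(\hat F_{n,x})=\int_{\mathbb{R}}|y|\,\hat F_{n,x}(\mathrm{d}y)=\sum_{i=1}^n w_{ni}(x)\,|Y_i|.
\]

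Next I would evaluate this at $X$ (independent of $\mathcal{D}_n$ and with distribution $P_X$) and note that, conditionally on $\mathcal{D}_n$, the random variable $m_1(\hat F_{n,X})$ is a convex combination of the deterministic (given $\mathcal{D}_n$) values $|Y_1|,\ldots,|Y_n|$. Therefore
\[
0\leq m_1(\hat F_{n,X})\leq \max_{1\leq i\leq n}|Y_i|=:\beta_n
\]
almost surely conditional on $\mathcal{D}_n$. Since $\beta_n$ is $\mathcal{D}_n$-measurable, this is exactly a conditional deterministic bound on $m_1(\hat F_{n,X})$.

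Finally I would invoke the standard fact (see Appendix~\ref{app:sub-gaussian}) that a random variable taking values in a bounded interval $[0,\beta_n]$ is $\beta_n$-sub-Gaussian; this follows from Hoeffding's lemma applied to $m_1(\hat F_{n,X})-\mathbb{E}[m_1(\hat F_{n,X})\mid\mathcal{D}_n]$, whose range is contained in an interval of length $\beta_n$. This yields Assumption~\ref{ass:subgaussian2}-$ii)$ with the claimed $\beta_n=\max_{1\leq i\leq n}|Y_i|$, uniformly over the finite list of models indexed by $m$ (since the bound depends only on the shape of the weights, not on the hyperparameter $k$ or \texttt{mtry}).

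There is no real obstacle here: the only things to verify are the convex-combination structure of $m_1(\hat F_{n,X})$ and the sub-Gaussianity of bounded random variables. The argument works identically for KNN and DRF because both estimators are weighted empirical c.d.f.s supported on the training responses.
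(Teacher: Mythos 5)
Your proposal is correct and follows essentially the same route as the paper's proof: write both estimators as weighted empirical c.d.f.s supported on the training responses, bound $m_1(\hat F_{n,X})=\sum_i w_{ni}(X)|Y_i|$ by $\max_i|Y_i|$, and conclude by sub-Gaussianity of bounded random variables (Proposition~\ref{prop:sub-gaussian-properties}-$ii)$, which in fact gives the slightly sharper parameter $\beta_n/2$). No gap.
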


\section{Numerical results}\label{sec:numerical-results}
In this section, we illustrate how model selection and model aggregation work on real data sets. The cross validation methodology, widely used in practice, is justified by the theoretical results obtained in Section~\ref{sec:main-results}. The source code for these experiments can be found at 
\url{https://github.com/ZaouiAmed/Neurips2024_DistributionalRegression}.

\paragraph{Datasets.} We consider two datasets used in the framework of heteroscedastic regression with reject option as detailed in~\citet{Zaoui2020}. The first dataset, \emph{QSAR aquatic toxicity} \citep{qsar} is referred to as \texttt{qsar} and comprises $546$ observations with $8$ numerical features used to predict acute toxicity in Pimephales promelas. The toxicity output ranges from $0.12$ to $10.05$ with evidence of a low heteroscedasticity. The second dataset, \emph{Airfoil Self-Noise} \citep{airfoil} is referred to as \texttt{airfoil} and consists of $1503$ observations with $5$ measured features from aerodynamic and acoustic tests. The output represents the scaled sound pressure level in decibels, ranging from $103$ to $140$ with evidence of a strong heteroscedasticity.. 

\paragraph{Models.} We consider the KNN and DRF models to predict the conditional distribution of the output variable. Our focus lies on hyperparameters selection via minimization of the validation error, where the main hyperparameter is the number \texttt{k} of neighbors and the number \texttt{mtry} of variables considered at each split for KNN and DRF respectively. We utilize the implementation of these methods from the \texttt{R} packages \texttt{KernelKnn} \cite{} and \texttt{DRF} \cite{}. For DRF, a preliminary exploration shows that  sensible choices for the other parameters are  \texttt{num.trees}$=1000$, \texttt{sample.fraction}=$0.9$, \texttt{min.node.size}$=1$ and default values for others parameters. Finally, the \texttt{R} package \texttt{ScoringRule} is used for CRPS computation and the \texttt{optim} function based on the Nelder-Mead method is used for parameter optimization in convex aggregation.

\paragraph{Methodology.} We use the same methodology for the two datasets. We divide the data into three parts: $50\%$ for training, $20\%$ for validation and $30\%$ for testing. In a first stage, the training set is used to train the model (KNN or DRF) for the various hyperparameters (\texttt{k} or \texttt{mtry}) and the validation set is used to select best hyperparameters $\widehat{\texttt{k}}$ or $\widehat{\texttt{mtry}}$. Furthermore, the validation set also used to choose for model selection (MS), choosing between KNN and RF, and the best convex aggregation (CA). In a second stage, the different models (KNN, RF, MS and CA) are refitted on the union of training and validation sets ($70\%$ of data) and evaluated on the test set ($30\%$) by computing the test CRPS-error. This process is repeated $100$ times for different random splits of the data into training, validation and testing sets. The distributions of the test error for KNN, DRF, MS and CA is then analyzed in terms of mean, standard error and boxplot.

\paragraph{Results.} For the sake of brevity, only the results for the \texttt{qsar} dataset are presented, while the results for the \texttt{airfoil} dataset are postponed to Appendix~\ref{app:additionnal-numerical-results}.  The first stage of the procedure where the hyperparameter (\texttt{k} or \texttt{mtry}) is selected by minimization of the validation error is shown on the left plot of Fig.~\ref{fig:qsar-val}  for KNN and on the middle plot for DRF. In both cases, the validation error curve shows a clear minimum allowing to select the hyperparameter $\widehat{\texttt{k}}=8$ and $\widehat{\texttt{mtry}}=4$ corresponding to CRPS-error of $0.696$ and $0.678$ on the validation set respectively. In the second stage, KNN, DRF, MS and CA are evaluated on the test set and the right plot of Fig.~\ref{fig:qsar-val} shows the distribution of the test error over 100 repetitions. We can see from the boxplots that the distribution of the test error is slightly larger for KNN than for DRF, that MS achieves almost the same performance as RF and that CA achieves slightly better performance than DRF. Hence model selection and convex aggregation accomplish their goal. The numerical values of the means together with standard errors are summarized in Table~\ref{tab:qsar}, confirming our analysis from the boxplots.

\begin{figure}[ht]
 \centering
 \subfloat{\includegraphics[scale=0.26]{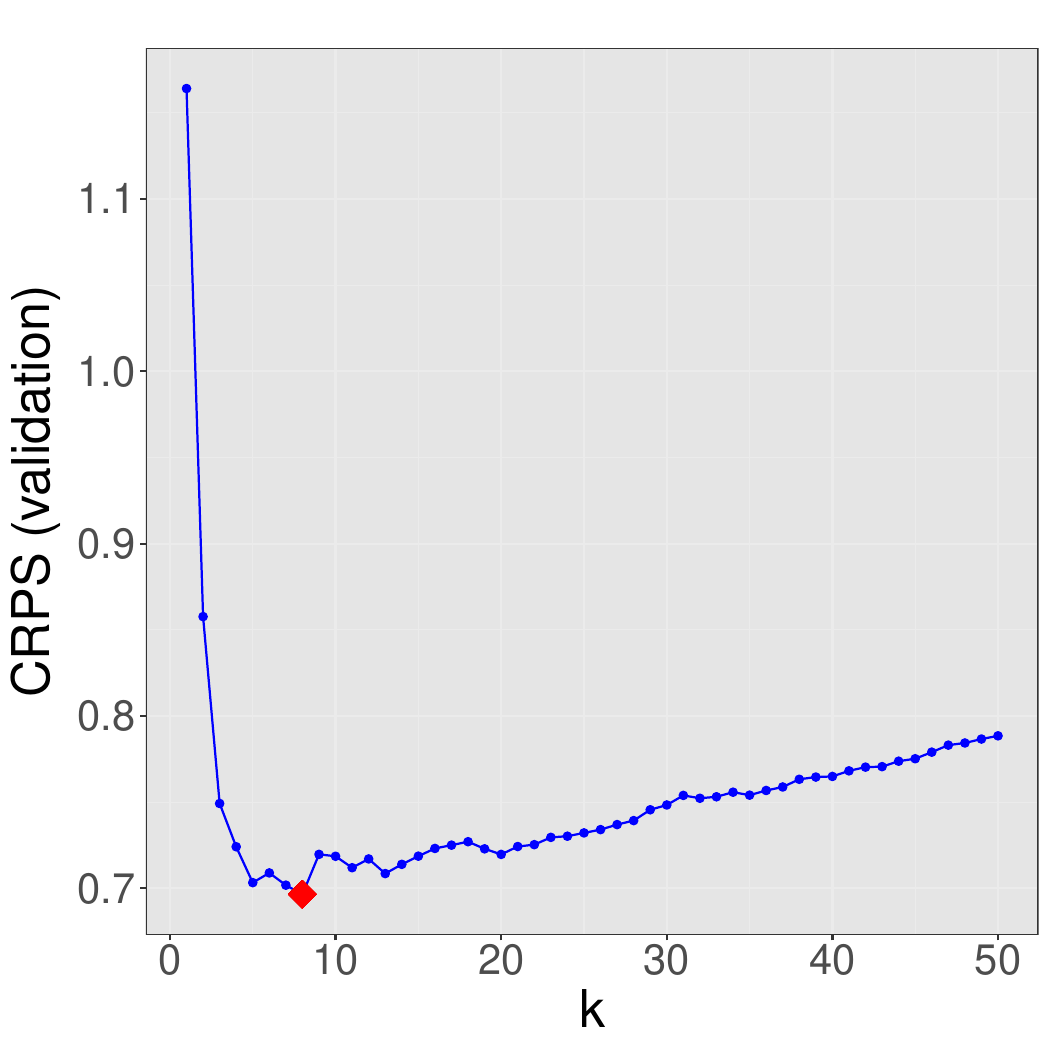}}%
 \subfloat{\includegraphics[scale=0.26]{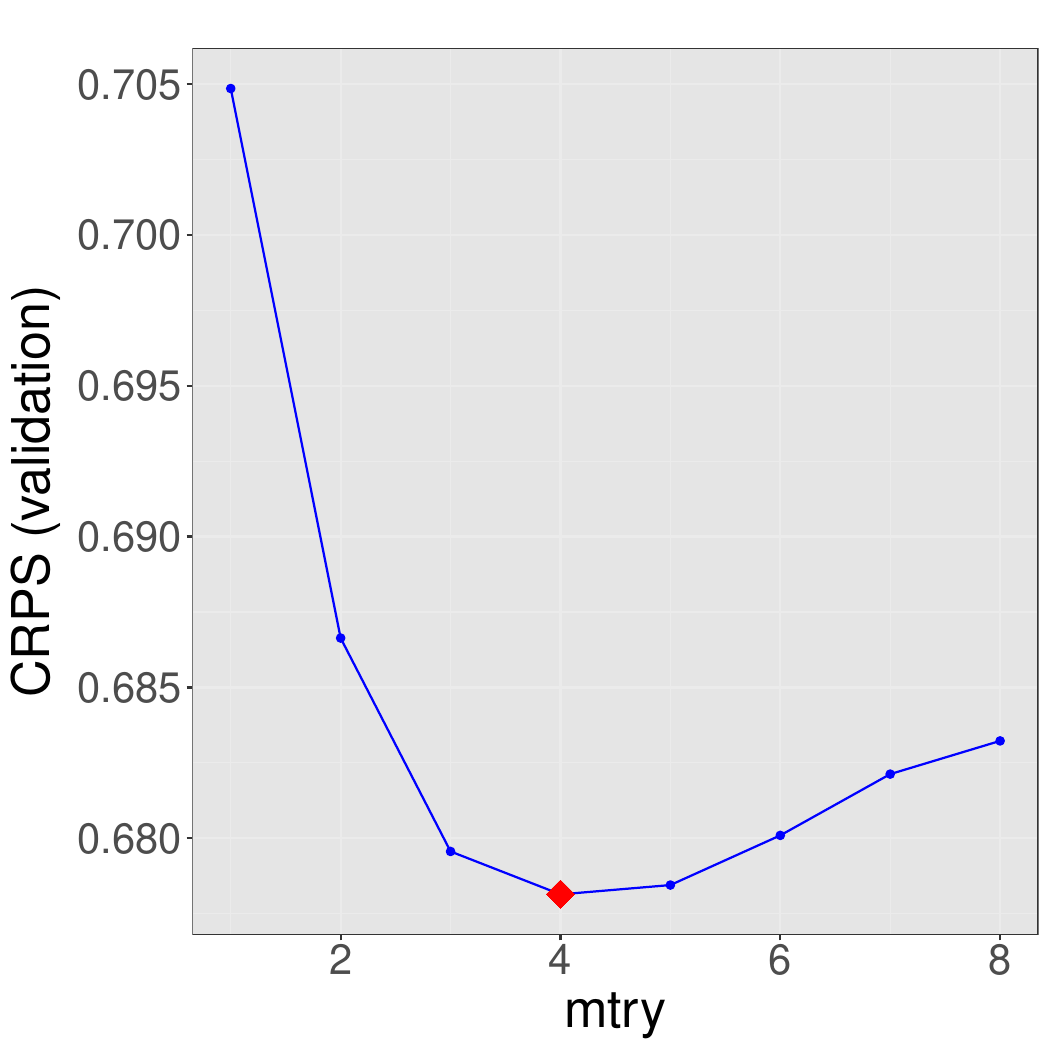}}
  \subfloat{\includegraphics[scale=0.26]{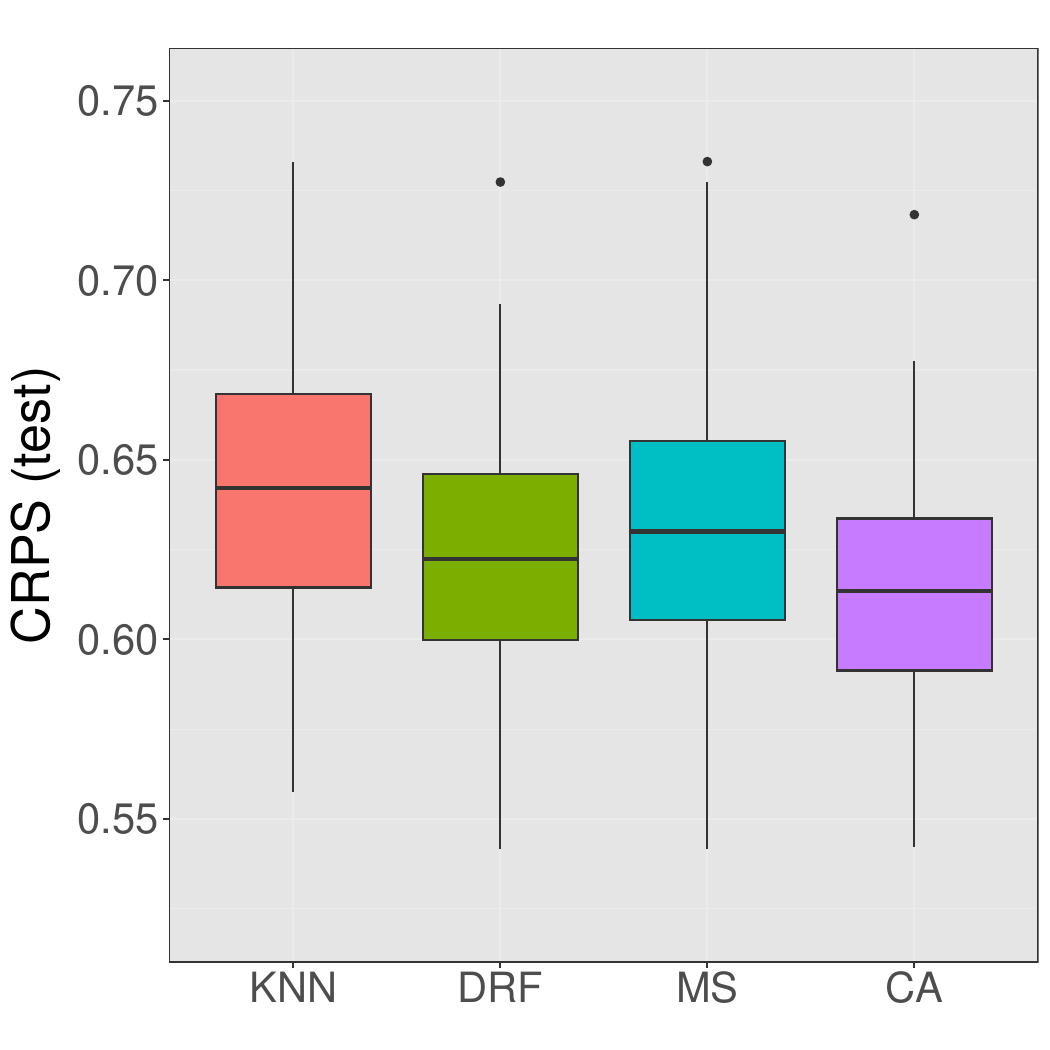}}
 \caption{\texttt{qsar} data. Left and middle: selection  of \texttt{k} for the KNN algorithm and of \texttt{mtry} for the DRF algorithm by minimization of the validation error. Right: test error evaluated with 100 repetitions for KNN, DRF, model selection (MS) and convex aggregation (CA).}
 \label{fig:qsar-val}
\end{figure}

\begin{table}[h!]
    \centering
    \begin{tabular}{c|c|c|c|}
       KNN  & DRF & MS & CA \\
       $0.643$ \small{($0.004$)}  & $0.624$ \small{($0.004$)} & $0.632$ \small{($0.004$)} & $0.612$ \small{($0.003$)}
    \end{tabular}
    \medskip
    \caption{\texttt{qsar} data. Mean of the test CRPS and its standard error (in parenthesis) over 100 repetitions.}
    \label{tab:qsar}
\end{table}

\section{Conclusion}
In this work, we considered distributional regression with error assessed using the CRPS and investigated model fitting via empirical risk minimization. Additionally, we explored classical aggregation procedures: model selection and convex aggregation where two independent samples are used, the first for model construction, the second for model selection or convex aggregation. We derived oracle concentration inequalities for the estimation error and established an upper bound for its expectation within the sub-Gaussian framework and beyond, considering weaker moment assumptions. These new theoretical results are solid mathematical justifications for common practices in the framework of distributional regression. In future work, we will study the minimax convergence rates for our approaches and consider the use of empirical process theory to strengthen our results.

\bibliographystyle{apalike}
\bibliography{sample.bib}

\begin{thebibliography}{}

\bibitem[Al~Masry et~al., 2024]{AlMasry2024}
Al~Masry, Z., Pic, R., Domby, C., and Devalland, C. (2024).
\newblock A new methodology to predict the oncotype scores based on
  clinico-pathological data with similar tumor profiles.
\newblock {\em Breast Cancer Res Treat}, 203:587--598.

\bibitem[Athey et~al., 2019]{Atheyetal2019}
Athey, S., Tibshirani, J., and Wager, S. (2019).
\newblock Generalized random forests.
\newblock {\em Ann. Statist.}, 47(2):1148--1178.

\bibitem[Ballabio et~al., 2019]{qsar}
Ballabio, D., Cassotti, M., Consonni, V., and Todeschini, R. (2019).
\newblock {QSAR aquatic toxicity}.
\newblock UCI Machine Learning Repository.
\newblock {DOI}: https://doi.org/10.24432/C5SG7H.

\bibitem[Baran and Lerch, 2015]{Baran2015}
Baran, S. and Lerch, S. (2015).
\newblock Log‐normal distribution based ensemble model output statistics
  models for probabilistic wind‐speed forecasting.
\newblock {\em Quarterly Journal of the Royal Meteorological Society},
  141(691):2289–2299.

\bibitem[Baran and Nemoda, 2016]{baran_2016}
Baran, S. and Nemoda, D. (2016).
\newblock Censored and shifted gamma distribution based emos model for
  probabilistic quantitative precipitation forecasting.
\newblock {\em Environmetrics}, 27.

\bibitem[Biau and Devroye, 2015]{Biau_Devroye15}
Biau, G. and Devroye, L. (2015).
\newblock {\em Lectures on the Nearest Neighbor Method}.
\newblock Springer Series in the Data Sciences. Springer New York.

\bibitem[Bobkov and Ledoux, 2019]{Bobkov_Ledoux16}
Bobkov, S. and Ledoux, M. (2019).
\newblock One-dimensional empirical measures, order statistics and
  {Kantorovich} transport distances.
\newblock {\em {Memoirs of the Amer. Math. Soc.}}, 261(1259):v+126, 2019. ISSN
  0065-9266.

\bibitem[Boucheron et~al., 2013]{boucheron_2013}
Boucheron, S., Lugosi, G., and Massart, P. (2013).
\newblock {\em Concentration inequalities}.
\newblock Oxford University Press, Oxford.
\newblock A nonasymptotic theory of independence, With a foreword by Michel
  Ledoux.

\bibitem[Breiman, 2001]{Breiman_2001}
Breiman, L. (2001).
\newblock Random forests.
\newblock {\em Machine Learning}, 45.

\bibitem[Brooks et~al., 2014]{airfoil}
Brooks, T., Pope, D., and Marcolini, M. (2014).
\newblock {Airfoil Self-Noise}.
\newblock UCI Machine Learning Repository.
\newblock {DOI}: https://doi.org/10.24432/C5VW2C.

\bibitem[Bunea et~al., 2007]{Bunea_Tsybakov_Wegkamp07}
Bunea, F., Tsybakov, A., and Wegkamp, M. (2007).
\newblock Aggregation for gaussian regression.
\newblock {\em Annals of Statistics}, 35(4):1674--1697.

\bibitem[Chhachhi and Teng, 2023]{Chhachhi_Teng23}
Chhachhi, S. and Teng, F. (2023).
\newblock On the 1-wasserstein distance between location-scale distributions
  and the effect of differential privacy.
\newblock {\em Preprint}.

\bibitem[Devroye et~al., 1996]{Devroye_Gyorfi_Lugosi96}
Devroye, L., Gy\"orfi, L., and Lugosi, G. (1996).
\newblock {\em A Probabilistic Theory of Pattern Recognition}.
\newblock Springer, New York.

\bibitem[Ćevid et~al., 2022]{Cevid_MichelNBM22}
Ćevid, D., Michel, L., Näf, J., Bühlmann, P., and Meinshausen, N. (2022).
\newblock Distributional random forests: Heterogeneity adjustment and
  multivariate distributional regression.
\newblock {\em Journal of Machine Learning Research}, 23(333):1--79.

\bibitem[Gneiting and Katzfuss, 2014]{Gneiting_Katzfuss14}
Gneiting, T. and Katzfuss, M. (2014).
\newblock Probabilistic forecasting.
\newblock {\em Annual Review of Statistics and Its Application}, 1(1):125--151.

\bibitem[Gneiting and Raftery, 2007]{Gneiting_Raftery07}
Gneiting, T. and Raftery, A. (2007).
\newblock Strictly proper scoring rules, prediction, and estimation.
\newblock {\em Journal of the American Statistical Association},
  102(477):359--378.

\bibitem[Gneiting et~al., 2005]{Gneiting05}
Gneiting, T., Raftery, A., A.H., W., and Goldman, T. (2005).
\newblock Calibrated probabilistic forecasting using ensemble model output
  statistics and minimum crps estimation.
\newblock {\em Monthly Weather Review}, 133(5):1098--1118.

\bibitem[Gy{\"o}rfi et~al., 2002]{Gyofri_Kohler_Krzyzak_Walk02}
Gy{\"o}rfi, L., Kohler, M., Krzy{\.z}ak, A., and Walk, H. (2002).
\newblock {\em A distribution-free theory of nonparametric regression}.
\newblock Springer Series in Statistics. Springer-Verlag, New York.

\bibitem[Henzi et~al., 2021a]{Henzi2021}
Henzi, A., Kleger, G.-R., Hilty, M.~P., Wendel~Garcia, P.~D., Ziegel, J.~F.,
  and on~behalf of RISC-19-ICU Investigators~for Switzerland (2021a).
\newblock Probabilistic analysis of covid-19 patients’ individual length of
  stay in swiss intensive care units.
\newblock {\em PLOS ONE}, 16:1--14.

\bibitem[Henzi et~al., 2021b]{Henzi_et_al_2021}
Henzi, A., Ziegel, J.~F., and Gneiting, T. (2021b).
\newblock {Isotonic Distributional Regression}.
\newblock {\em Journal of the Royal Statistical Society Series B: Statistical
  Methodology}, 83(5):963--993.

\bibitem[Hoeffding, 1963]{Hoeffding63}
Hoeffding, W. (1963).
\newblock Probability inequalities for sums of bounded random variables.
\newblock {\em Journal of the American Statistical Association}, 58:13--30.

\bibitem[Matheson and Winkler, 1976]{Matheson_Winkler76}
Matheson, J.~E. and Winkler, R.~L. (1976).
\newblock Scoring rules for continuous probability distributions.
\newblock {\em Management Science}, 22(10):1087--1096.

\bibitem[Meinshausen, 2006]{meinshausen_2006}
Meinshausen, N. (2006).
\newblock Quantile regression forests.
\newblock {\em The Journal of Machine Learning Research}, 7:983--999.

\bibitem[Petrov, 1995]{petrov1995limit}
Petrov, V. (1995).
\newblock {\em Limit Theorems of Probability Theory: Sequences of Independent
  Random Variables}.
\newblock Oxford science publications. Clarendon Press.

\bibitem[Pic et~al., 2023]{Pic_Dombry_Naveau_Taillardat23}
Pic, R., Dombry, C., Naveau, P., and Taillardat, M. (2023).
\newblock Distributional regression and its evaluation with the crps: Bounds
  and convergence of the minimax risk.
\newblock {\em Journal of the American Statistical Association},
  39(4):1564--1572.

\bibitem[Rasp and Lerch, 2018]{Rasp_Lerch18}
Rasp, S. and Lerch, S. (2018).
\newblock Neural networks for post-processing ensemble weather forecasts.
\newblock {\em Monthly Weather Review}, 146:3885--3900.

\bibitem[Scheuerer, 2013]{scheuerer_2013}
Scheuerer, M. (2013).
\newblock Probabilistic quantitative precipitation forecasting using ensemble
  model output statistics.
\newblock {\em Quarterly Journal of the Royal Meteorological Society}, 140.

\bibitem[Scheuerer and Hamill, 2015]{scheuerer_2015}
Scheuerer, M. and Hamill, T.~M. (2015).
\newblock Statistical post-processing of ensemble precipitation forecasts by
  fitting censored, shifted gamma distributions.
\newblock {\em Monthly Weather Review}, 143.

\bibitem[Schulz et~al., 2021]{Schulz2021}
Schulz, B., {El Ayari}, M., Lerch, S., and Baran, S. (2021).
\newblock Post-processing numerical weather prediction ensembles for
  probabilistic solar irradiance forecasting.
\newblock {\em Solar Energy}, 220:1016--1031.

\bibitem[Schulz and Lerch, 2022]{Schulz_Lerch22}
Schulz, B. and Lerch, S. (2022).
\newblock Machine learning methods for postprocessing ensemble forecasts of
  wind gusts: A systematic comparison.
\newblock {\em Monthly Weather Review}, 150(1):235--257.

\bibitem[Taillardat et~al., 2016]{Taillarda_Mestre_Zamo_Naveau16}
Taillardat, M., Mestre, O., Zamo, M., and Naveau, P. (2016).
\newblock Calibrated ensemble forecasts using quantile regression forests and
  ensemble model output statistics.
\newblock {\em Monthly Weather Review}, 144(6):2375--2393.

\bibitem[Toth, 1989]{ZT89}
Toth, Z. (1989).
\newblock Long-range weather forecasting using an analog approach.
\newblock {\em Journal of Climate}, 2(6):594 -- 607.

\bibitem[Tsybakov, 2003]{Tsybakov03}
Tsybakov, A. (2003).
\newblock Optimal rates of aggregation.
\newblock {\em Learning Theory and Kernel Machines}, 2777:303--313.

\bibitem[Vershynin, 2018]{vershynin18}
Vershynin, R. (2018).
\newblock {\em High-Dimensional Probability: An Introduction with Applications
  in Data Science}.
\newblock Cambridge Series in Statistical and Probabilistic Mathematics.
  Cambridge University Press.

\bibitem[Zaoui et~al., 2020]{Zaoui2020}
Zaoui, A., Denis, C., and Hebiri, M. (2020).
\newblock Regression with reject option and application to knn.
\newblock In Larochelle, H., Ranzato, M., Hadsell, R., Balcan, M., and Lin, H.,
  editors, {\em Advances in Neural Information Processing Systems}, volume~33,
  pages 20073--20082. Curran Associates, Inc.

\end{thebibliography}

\newpage
\appendix
\section{Sub-gaussian distributions and concentration inequalities}\label{app:sub-gaussian}
The concept of sub-Gaussianity has been extensively studied and is a key tool in deriving concentration inequalities, as documented in the monographs by  ~\citet{vershynin18} and~\cite{boucheron_2013}. For the convenience of the reader, we recall the definition and basic properties of sub-Gaussian random variables as well as the useful Hoeffding inequality. 

\subsection{Sub-Gaussian random variables}

\begin{definition}[Sub-Gaussian random variable]
Let $\beta >0$. The random variable $X$ is said to be $\beta$-sub-Gaussian if 
\begin{equation*}
\mathbb{E}\left[e^{\lambda\left(X-\mathbb{E}[X]\right)}\right]\leq e^{\frac{1}{2}\lambda^2\beta^2}\quad \mbox{for all } \lambda \in \mathbb{R}.
\end{equation*}
\end{definition}
\begin{proposition}\label{prop:sub-gaussian-properties}
Let $\beta, \beta_1, \beta_2>0$.
\begin{enumerate}
\item[$i)$] If $X$ is $\beta$-sub-Gaussian, then  $|X|$ is also $\beta$-sub-Gaussian.
\item[$ii)$] If $X$ is a bounded random variable such that $X\in [a,b]$ almost surely, then  $X$ is $\beta$-sub-Gaussian with $\beta=(b-a)/2$.
\item[$iii)$] If $X, Y$ are random variables such that $X$ is $\beta$-sub-Gaussian and  $0\leq Y\leq X$, then $Y$ is also $\beta$-sub-Gaussian.
\item[$iv)$] If $X_i$ is $\beta_i$-sub-Gaussian for $i=1,2$, the  sum $Y=X_1+X_2$ is $\beta$-sub-Gaussian with $\beta=\sqrt{2(\beta_{1}^{2}+\beta_{2}^{2})}$.
\end{enumerate}
\end{proposition}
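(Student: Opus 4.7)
All four items are classical properties that I would establish by direct manipulation of the MGF definition, using only Jensen's inequality, Cauchy--Schwarz, and convexity/monotonicity arguments. For $(ii)$, I would invoke Hoeffding's lemma in its standard form: write any $X\in[a,b]$ as the convex combination $X=\tfrac{b-X}{b-a}a+\tfrac{X-a}{b-a}b$, apply convexity of $u\mapsto e^{\lambda u}$, take expectations, and then bound $\psi(\lambda)=\log\mathbb{E}[e^{\lambda(X-\mathbb{E}X)}]$ by $\lambda^2(b-a)^2/8$ via the usual calculus argument $\psi''(\lambda)\leq (b-a)^2/4$, giving $\beta=(b-a)/2$. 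For $(iv)$, Cauchy--Schwarz applied to the joint MGF of the sum (which avoids any independence assumption) gives
\[
\mathbb{E}[e^{\lambda(X_1+X_2-\mathbb{E}X_1-\mathbb{E}X_2)}]\leq\sqrt{\mathbb{E}[e^{2\lambda(X_1-\mathbb{E}X_1)}]\mathbb{E}[e^{2\lambda(X_2-\mathbb{E}X_2)}]}\leq e^{\lambda^2(\beta_1^2+\beta_2^2)},
\]
matching the form $e^{\lambda^2\beta^2/2}$ with $\beta=\sqrt{2(\beta_1^2+\beta_2^2)}$.

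\textbf{Absolute value via symmetrization.} For $(i)$, I would introduce an independent copy $X'$ of $X$. Jensen's inequality applied to the convex function $u\mapsto e^{-\lambda u}$ gives $e^{-\lambda\mathbb{E}|X|}=e^{-\lambda\mathbb{E}|X'|}\leq\mathbb{E}_{X'}[e^{-\lambda|X'|}]$, so $\mathbb{E}[e^{\lambda(|X|-\mathbb{E}|X|)}]\leq\mathbb{E}[e^{\lambda(|X|-|X'|)}]$. Since $|X|-|X'|$ is symmetric about zero (swapping $X$ and $X'$ negates it), this last expectation equals $\mathbb{E}[\cosh(\lambda(|X|-|X'|))]$. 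The reverse triangle inequality $\bigl||X|-|X'|\bigr|\leq|X-X'|$ combined with the monotonicity of $\cosh$ on $[0,\infty)$ bounds this by $\mathbb{E}[\cosh(\lambda(X-X'))]=\mathbb{E}[e^{\lambda(X-X')}]$; by independence this factorizes into a product of two sub-Gaussian MGFs of the centered variables $X-\mathbb{E}X$ and $-(X'-\mathbb{E}X')$, delivering the bound.

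\textbf{Domination and main obstacle.} For $(iii)$, I would split according to the sign of $\lambda$: when $\lambda\geq 0$, $0\leq Y\leq X$ gives $e^{\lambda Y}\leq e^{\lambda X}$, hence $\mathbb{E}[e^{\lambda Y}]\leq\mathbb{E}[e^{\lambda X}]$; when $\lambda\leq 0$, $Y\geq 0$ gives $e^{\lambda Y}\leq 1$. Combining these with the centering adjustment by $\mathbb{E}Y$ and the inequality $0\leq\mathbb{E}Y\leq\mathbb{E}X$ should recover a sub-Gaussian control. The main obstacle is the bookkeeping of sharp constants in $(i)$ and $(iii)$: the naive symmetrization in $(i)$ produces a $\sqrt{2}\beta$ factor that needs to be tightened (e.g.\ via a more delicate cosh-based contraction argument) to recover the claimed $\beta$, and $(iii)$ requires delicate handling because the centering in the definition is by $\mathbb{E}Y\neq\mathbb{E}X$, so one must absorb the mean gap $\mathbb{E}X-\mathbb{E}Y$ using the positivity of $Y$ to avoid inflating the sub-Gaussian parameter.
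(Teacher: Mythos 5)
The paper never actually proves this proposition: it is stated in Appendix~A as a recollection of ``basic properties'' with a pointer to the monographs of Vershynin and Boucheron et al., so there is no in-paper argument to compare yours against line by line. On its own terms, your outline for $(ii)$ (Hoeffding's lemma) and $(iv)$ (Cauchy--Schwarz on the MGF of the sum, correctly avoiding any independence assumption) is sound and complete, and your symmetrization for $(i)$ is also valid as far as it goes --- but, as you concede, it only shows that $|X|$ is $\sqrt{2}\beta$-sub-Gaussian. You should not expect to tighten this to $\beta$: the exact constant in $(i)$ is not attainable in general. Take $\mathbb{P}(X=\pm 1)=p$, $\mathbb{P}(X=0)=1-2p$ with $p$ small; the optimal variance proxy of the folded variable $|X|\sim\mathrm{Bernoulli}(2p)$ equals the sharp Kearns--Saul constant $\tfrac{1-4p}{2\log\left((1-2p)/(2p)\right)}$, and for $p=0.01$ this is about $0.123$, whereas $\sup_{\lambda}2\lambda^{-2}\log\mathbb{E}[e^{\lambda X}]\approx 0.109$. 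So a constant inflation in $(i)$ is unavoidable and your $\sqrt{2}$ factor is a reasonable place to stop.

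The genuine gap is $(iii)$, where the sketch cannot be completed for a structural reason you half-identify but do not resolve. For $\lambda\geq 0$ the domination gives $\mathbb{E}[e^{\lambda(Y-\mathbb{E}Y)}]\leq e^{\lambda(\mathbb{E}X-\mathbb{E}Y)}e^{\lambda^2\beta^2/2}$ and for $\lambda\leq 0$ positivity gives $\mathbb{E}[e^{\lambda(Y-\mathbb{E}Y)}]\leq e^{|\lambda|\mathbb{E}Y}$; in both regimes the leftover factor is \emph{linear} in $|\lambda|$ and cannot be absorbed into $e^{\lambda^2\beta^2/2}$ as $\lambda\to 0$ unless the corresponding mean vanishes. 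In fact $(iii)$ is false as stated: let $X\equiv 2$, which is $\beta$-sub-Gaussian for every $\beta>0$, and let $Y$ be uniform on $\{0,2\}$, so that $0\leq Y\leq X$ but $\mathbb{E}[e^{\lambda(Y-\mathbb{E}Y)}]=\cosh\lambda=1+\lambda^2/2+o(\lambda^2)$, which exceeds $e^{\lambda^2\beta^2/2}$ for small $\lambda$ whenever $\beta<1$ (indeed $\mathrm{Var}(Y)=1$, so no proxy below $1$ is possible). Any correct version of $(iii)$ must let the parameter of $Y$ depend on $\mathbb{E}[X]$ (equivalently, on a non-centered Orlicz norm of $X$) and not on $\beta$ alone; note that this also affects the constant claimed in Proposition~\ref{prop:CRPS-subgaussian}, which invokes $(iii)$ and should therefore pick up a contribution from $\mathbb{E}|Y|$ and $\sup_{\theta}\mathbb{E}[m_1(F_{\theta,X})]$. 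Your instinct that the centering by $\mathbb{E}Y\neq\mathbb{E}X$ is the obstacle was exactly right; the resolution is that the statement, not just your proof, needs to change.
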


\subsection{Concentration inequalities}
In this section we gather several technical results which are used to derive the contributions of this work. We start with the general Hoeffding's inequality \citep[Theorem~2.6.2 p. 28]{vershynin18}.
\begin{proposition}[~\cite{Hoeffding63}]\label{prop:hoeffding}
Let $Z_1,\ldots, Z_n$ be independent random variables such that $Z_i$ is $\beta$-sub-Gaussian for all $i=1\ldots,n$. Then, for all $t\geq 0$ we have
\begin{equation*}
\mathbb{P}\left(\bigg|\frac{1}{n}\sum_{i=1}^n\Big(Z_i-\mathbb{E}[Z_i]\Big)\bigg|\geq t\right)\leq 2\exp\left(-\frac{nt^2}{2\beta^2}\right).
\end{equation*}
\end{proposition}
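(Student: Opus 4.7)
The plan is to follow the classical Chernoff bound strategy combined with the sub-Gaussian moment generating function estimate. Let $S_n = \frac{1}{n}\sum_{i=1}^n (Z_i - \mathbb{E}[Z_i])$. First I would handle the one-sided tail $\mathbb{P}(S_n \geq t)$ and then symmetrize at the end to get the factor of $2$.

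For the one-sided bound, the first step is Markov's inequality applied to the exponential: for any $\lambda > 0$,
\[
\mathbb{P}(S_n \geq t) = \mathbb{P}\bigl(e^{\lambda S_n} \geq e^{\lambda t}\bigr) \leq e^{-\lambda t}\,\mathbb{E}\bigl[e^{\lambda S_n}\bigr].
\]
The second step is to exploit independence of the $Z_i$ to factor the expectation:
\[
\mathbb{E}\bigl[e^{\lambda S_n}\bigr] = \prod_{i=1}^n \mathbb{E}\bigl[e^{(\lambda/n)(Z_i - \mathbb{E}[Z_i])}\bigr].
\]
Then I would apply the $\beta$-sub-Gaussian hypothesis to each factor with the parameter $\lambda/n$ in place of $\lambda$, yielding the bound $\exp\bigl(\frac{1}{2}(\lambda/n)^2\beta^2\bigr)$ for each factor, and hence $\mathbb{E}[e^{\lambda S_n}] \leq \exp\bigl(\lambda^2\beta^2/(2n)\bigr)$.

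Combining these steps gives $\mathbb{P}(S_n \geq t) \leq \exp\bigl(-\lambda t + \lambda^2\beta^2/(2n)\bigr)$, and the final step is to optimize in $\lambda > 0$: the quadratic in $\lambda$ is minimized at $\lambda^\star = nt/\beta^2$, producing
\[
\mathbb{P}(S_n \geq t) \leq \exp\!\left(-\frac{nt^2}{2\beta^2}\right).
\]
Applying the identical argument to $-Z_i$ (which is also $\beta$-sub-Gaussian, since sub-Gaussianity is symmetric in sign by the definition through the centered MGF) controls $\mathbb{P}(-S_n \geq t)$, and a union bound delivers the two-sided statement.

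There is no real obstacle here: every step is a direct textbook manipulation. The only thing worth double-checking is the rescaling $\lambda \mapsto \lambda/n$ inside the sub-Gaussian MGF bound, which accounts for the $n$ that appears in the exponent of the final inequality and which is easy to mis-track if one works with $\sum Z_i$ rather than $\frac{1}{n}\sum Z_i$.
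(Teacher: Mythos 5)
Your proof is correct: the Chernoff--MGF argument, the factorization over independent variables with the rescaled parameter $\lambda/n$, the optimization at $\lambda^\star = nt/\beta^2$, and the symmetrization via $-Z_i$ all check out. The paper does not prove this proposition itself but cites it (Hoeffding, 1963; Vershynin, Theorem 2.6.2), and your argument is precisely the standard proof behind that citation, so there is nothing to add.
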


The following Lemma is often useful to derive upper bound in expectation from concentration inequalities.
\begin{lemma}\label{lem:technProba}
Let $a\geq 1$ , $b>0$  and $Z$ a positive random variable such that
\begin{equation*}
\label{cond:lem}
\mathbb{P}\left(Z\geq t\right) \leq \exp(a-b t^2) \quad\mbox{for all $t\geq 0$}.
\end{equation*}
Then we have
\begin{equation*}
\mathbb{E}[Z]\leq \left (1+\frac{\sqrt{\pi}}{2}\right )\sqrt{\frac{a}{b}}\leq 2\sqrt{\frac{a}{b}}.
\end{equation*}
\end{lemma}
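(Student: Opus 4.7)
The plan is to use the layer-cake representation
\[
\mathbb{E}[Z] = \int_0^\infty \mathbb{P}(Z \geq t)\, dt,
\]
valid for any nonnegative random variable, and then to split the integral at a well-chosen threshold.

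First I would choose $t_0 = \sqrt{a/b}$. The motivation is that at this threshold the exponent $a - bt_0^2$ vanishes, so the bound $\exp(a-bt^2)$ equals $1$ there. This is exactly the point where the trivial bound $\mathbb{P}(Z \geq t) \leq 1$ stops being wasteful compared to the hypothesis. I would then write
\[
\mathbb{E}[Z] = \int_0^{t_0} \mathbb{P}(Z \geq t)\, dt + \int_{t_0}^\infty \mathbb{P}(Z \geq t)\, dt,
\]
bound the first integral trivially by $t_0 = \sqrt{a/b}$, and bound the second using the hypothesis.

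For the tail integral, I would perform the substitution $u = t\sqrt{b}$ to get
\[
\int_{t_0}^\infty e^{a - bt^2}\, dt = \frac{e^a}{\sqrt{b}} \int_{\sqrt{a}}^\infty e^{-u^2}\, du,
\]
and then estimate the remaining Gaussian tail by the translation trick $u = v + \sqrt{a}$:
\[
\int_{\sqrt{a}}^\infty e^{-u^2}\, du \leq e^{-a}\int_0^\infty e^{-v^2}\, dv = e^{-a}\,\frac{\sqrt{\pi}}{2}.
\]
Combining gives the tail contribution $\frac{\sqrt{\pi}}{2\sqrt{b}}$, and since $a \geq 1$ this is at most $\frac{\sqrt{\pi}}{2}\sqrt{a/b}$. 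Adding the two pieces yields $\mathbb{E}[Z] \leq (1 + \sqrt{\pi}/2)\sqrt{a/b}$, and the numerical bound $1 + \sqrt{\pi}/2 \leq 2$ finishes the proof.

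There is no real obstacle here: the only subtlety is choosing $t_0$ so that the two regimes match cleanly, and using the hypothesis $a \geq 1$ at the very end to absorb the factor $1/\sqrt{b}$ into $\sqrt{a/b}$ (without this hypothesis, one would get the sharper but less convenient bound $\sqrt{a/b} + \sqrt{\pi}/(2\sqrt{b})$).
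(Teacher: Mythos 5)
Your proof is correct and follows essentially the same route as the paper's: the same layer-cake representation, the same split at $t_0=\sqrt{a/b}$, and the same Gaussian-tail estimate (your translation $u=v+\sqrt{a}$ is exactly the paper's inequality $(u-v)^2\leq u^2-v^2$ in disguise), with $a\geq 1$ used at the end in the same way. No issues.
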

\begin{proof}
The assumption implies that $\mathbb{P}\left(Z\geq t\right) \leq \min\left(1, \exp(a-b t^2)\right)$ for all $t\geq 0$. Then, 
\begin{equation}
\label{eq: technProba1}
\mathbb{E}[Z]=\int_{0}^{+\infty}\mathbb{P}(Z\geq t)dt=\sqrt{\frac{a}{b}}+\int_{\sqrt{a/b}}^{+\infty}\exp\left(-(bt^2-a)\right)dt.
\end{equation}
Since $(u-v)^2\leq u^2-v^2$ for $0\leq v\leq u$, we have 
\begin{equation}
\int_{\sqrt{a/b}}^{+\infty}\exp\left(-(bt^2-a)\right)dt\leq  \int_{\sqrt{a/b}}^{+\infty}\exp\left(-b\left(t-\sqrt{a/b}\right)^2\right)dt=\frac{\sqrt{\pi}}{2\sqrt{b}}\le \frac{\sqrt{\pi}}{2}\sqrt{\frac{a}{b}},
\label{eq: technProba2}
\end{equation}
where the last inequality uses $a\geq 1$. Combining Eq.~\eqref{eq: technProba1} and Eq.~\eqref{eq: technProba2} yields the result.
\end{proof}

\subsection{Application to the  empirical risk}
The results from the last two sections are applied to the CRPS and the empirical risk.

\begin{proposition}\label{prop:CRPS-subgaussian}
    Under Assumption~\ref{ass:subgaussian}, for all $\theta\in\Theta$, the random variable $S(F_{\theta,X},Y)$ is $\beta$-sub-Gaussian with $\beta=\sqrt{2(\beta_1^2+\beta_2^2)}$.
\end{proposition}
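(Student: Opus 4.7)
The plan is to reduce the bound to an application of the sub-Gaussian closure properties from Proposition~\ref{prop:sub-gaussian-properties}, after controlling $S(F_{\theta,X},Y)$ by a simple sum of the two sub-Gaussian quantities $|Y|$ and $m_1(F_{\theta,X})$.

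First I would establish the pointwise upper bound $0 \leq S(F,y) \leq m_1(F) + |y|$ for any $F\in\mathcal{P}_1(\mathbb{R})$ and $y\in\mathbb{R}$. The nonnegativity is immediate from the definition in Eq.~\eqref{eq:CRPS}. For the upper bound I would use the alternative representation
\[
S(F,y) = \mathbb{E}_{Y'\sim F}[|Y'-y|] - \tfrac{1}{2}\mathbb{E}_{Y',Y''\sim F}[|Y'-Y''|],
\]
with $Y',Y''$ i.i.d. from $F$ (this identity follows from Eq.~\eqref{eq:CRPS} by Fubini, and is standard; alternatively one can argue directly from $(\mathds{1}_{y\leq z}-F(z))^2 \leq |\mathds{1}_{y\leq z}-F(z)|$ and compute). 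Since the second term is nonnegative, dropping it and applying the triangle inequality yields
\[
S(F,y)\leq \mathbb{E}_{Y'\sim F}[|Y'-y|] \leq \mathbb{E}_{Y'\sim F}[|Y'|] + |y| = m_1(F)+|y|.
\]

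Applying this at $F=F_{\theta,X}$ and at the observation $Y$ gives $0\leq S(F_{\theta,X},Y) \leq m_1(F_{\theta,X})+|Y|$. Now I would chain the four items of Proposition~\ref{prop:sub-gaussian-properties}: by item $i)$, $|Y|$ is $\beta_1$-sub-Gaussian and, using nonnegativity of $m_1$, also $m_1(F_{\theta,X})=|m_1(F_{\theta,X})|$ is $\beta_2$-sub-Gaussian under Assumption~\ref{ass:subgaussian}-$ii)$. By item $iv)$, the sum $m_1(F_{\theta,X})+|Y|$ is $\sqrt{2(\beta_1^2+\beta_2^2)}$-sub-Gaussian. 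Finally, since $0\leq S(F_{\theta,X},Y)\leq m_1(F_{\theta,X})+|Y|$, item $iii)$ transfers the sub-Gaussian property to $S(F_{\theta,X},Y)$ with the same constant.

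There is no real obstacle here: the only step that requires a small amount of care is justifying the CRPS decomposition (or, equivalently, the direct bound $S(F,y)\leq m_1(F)+|y|$ from the definition), and making sure the bound is applied pointwise in $(X,Y)$ so that Proposition~\ref{prop:sub-gaussian-properties}-$iii)$ applies to the random variable $S(F_{\theta,X},Y)$ rather than to a deterministic inequality. Everything else is a one-line invocation of standard sub-Gaussian calculus.
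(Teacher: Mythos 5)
Your proof is correct and follows essentially the same route as the paper's: the alternative kernel representation of the CRPS, the pointwise bound $S(F_{\theta,X},Y)\leq |Y|+m_1(F_{\theta,X})$, and then the closure properties of sub-Gaussian variables (sum, then domination of a nonnegative variable). The only cosmetic difference is that you drop the term $-\tfrac{1}{2}\mathbb{E}[|Z'-Z|]$ outright using its nonnegativity, which is marginally cleaner than the paper's triangle-inequality manipulation but yields the identical bound.
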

\begin{proof}
We use following alternative representation of the CRPS, see \citet{Gneiting_Raftery07}: for fixed $F\in\mathcal{P}_1(\mathbb{R})$ and $y\in\mathbb{R}$,
\[
S(F,y)=\mathbb{E}[|Z-y|]-\frac{1}{2}\mathbb{E}[|Z'-Z|]
\]
where $Z,Z'$ denote independent random variables with distribution $F$. The triangle inequality then implies
\begin{align*}
S(F,y)=\mathbb{E}[|Z-y|]-\frac{1}{2}\mathbb{E}[|Z'-Z|]&\leq |y|+\frac{1}{2}\mathbb{E}[|Z+Z|-|Z'-Z|]\\
&\leq |y|+\mathbb{E}[|Z|]=|y|+m_1(F),
\end{align*}
with $m_1(F)$ the absolute moment of $F$. We deduce that, for all $\theta\in\Theta$,
\begin{equation}\label{eq:crps-bound}
S(F_{\theta,X},Y)\leq |Y|+m_1(F_{\theta,X})
\end{equation}
where, by Assumption~\ref{ass:subgaussian},  $|Y|$ and $m_1(F_{\theta,X})$ are sub-Gaussian with parameter $\beta_1$ and $\beta_2$ respectively. Then, Proposition~\ref{prop:sub-gaussian-properties} implies that $S(F_{\theta,X},Y)$ is $\beta$-sub-Gaussian with $\beta=\sqrt{2(\beta_1^2+\beta_2^2)}$.
\end{proof}

\begin{corollary}\label{cor:empirical-risk-subgaussian}
    Under Assumption~\ref{ass:subgaussian}, for all $\theta\in\Theta$, the empirical risk $\hat{\mathcal{R}}_n(F_\theta)$ satisfies the concentration inequality
    \[
    \mathbb{P}\left(\Big|\hat{\mathcal{R}}_n(F_\theta)-\mathcal{R}(F_\theta) \Big|\geq t\right) \leq 2\exp\left(-\frac{nt^2}{2\beta^2}\right),\quad \mbox{for all $t\geq 0$}
    \]
    with $\beta=\sqrt{2(\beta_1^2+\beta_2^2)}$.
\end{corollary}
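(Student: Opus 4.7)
The plan is essentially to combine two results already available in the excerpt: Proposition~\ref{prop:CRPS-subgaussian}, which provides the sub-Gaussianity of the pointwise CRPS loss, and the general Hoeffding inequality in Proposition~\ref{prop:hoeffding}, applied to the empirical mean defining $\hat{\mathcal{R}}_n(F_\theta)$.

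More precisely, I would first fix $\theta\in\Theta$ and introduce the random variables $Z_i=S(F_{\theta,X_i},Y_i)$ for $1\leq i\leq n$. Because the pairs $(X_i,Y_i)$ are i.i.d. copies of $(X,Y)$, the $Z_i$ are i.i.d., each with the same distribution as $S(F_{\theta,X},Y)$. In particular, $\mathbb{E}[Z_i]=\mathcal{R}(F_\theta)$ and $\frac{1}{n}\sum_{i=1}^n Z_i=\hat{\mathcal{R}}_n(F_\theta)$. By Proposition~\ref{prop:CRPS-subgaussian}, Assumption~\ref{ass:subgaussian} guarantees that each $Z_i$ is $\beta$-sub-Gaussian with $\beta=\sqrt{2(\beta_1^2+\beta_2^2)}$.

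The second and final step is to apply Hoeffding's inequality (Proposition~\ref{prop:hoeffding}) to the i.i.d. $\beta$-sub-Gaussian sequence $(Z_i)_{1\leq i\leq n}$, which immediately gives, for every $t\geq 0$,
\[
\mathbb{P}\!\left(\bigl|\hat{\mathcal{R}}_n(F_\theta)-\mathcal{R}(F_\theta)\bigr|\geq t\right)=\mathbb{P}\!\left(\bigg|\frac{1}{n}\sum_{i=1}^n\bigl(Z_i-\mathbb{E}[Z_i]\bigr)\bigg|\geq t\right)\leq 2\exp\!\left(-\frac{nt^2}{2\beta^2}\right),
\]
which is exactly the claimed bound.

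There is no real obstacle here: the statement is a direct corollary obtained by chaining Proposition~\ref{prop:CRPS-subgaussian} (to verify the sub-Gaussianity hypothesis of Hoeffding) with Proposition~\ref{prop:hoeffding}. The only thing worth double-checking is that the i.i.d. structure of $(X_i,Y_i)$ transfers to the $Z_i$, which is immediate since $Z_i$ is a measurable function of $(X_i,Y_i)$ and $\theta$ is fixed (not data-dependent).
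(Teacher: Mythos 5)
Your proof is correct and follows exactly the same route as the paper's: define $Z_{\theta,i}=S(F_{\theta,X_i},Y_i)$, note they are i.i.d.\ with mean $\mathcal{R}(F_\theta)$ and empirical mean $\hat{\mathcal{R}}_n(F_\theta)$, invoke Proposition~\ref{prop:CRPS-subgaussian} for their $\beta$-sub-Gaussianity, and conclude with Hoeffding's inequality (Proposition~\ref{prop:hoeffding}). Nothing is missing.
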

\begin{proof}
The  random variables $Z_{\theta,i}=S(F_{\theta,X_i},Y_i)$, $1\leq i\leq n$, are i.i.d. with expectation 
\[
\mathbb{E}[Z_{\theta,i}]=\mathbb{E}[S(F_{\theta,X_i},Y_i)]=\mathcal{R}(F_\theta)
\]
and empirical mean 
\[
\frac{1}{n}\sum_{i=1}^n Z_{\theta,i}=\frac{1}{n}\sum_{i=1}^n S(F_{\theta,X_i},Y_i)=\hat{\mathcal{R}}_n(F_\theta).
\]
By Proposition~\ref{prop:CRPS-subgaussian}, $Z_{\theta,i}$ is $\beta$-sub-Gaussian with $\beta=\sqrt{2(\beta_1^2+\beta_2^2)}$. We can then apply the general Hoeffding's inequality (Proposition~\ref{prop:hoeffding}) and deduce the result.
\end{proof}

\section{Proof of the results of Section~\ref{sec:results-model-fitting}}\label{app:proof-model-fitting}
Before proving Theorem~\ref{thm:estimation-error}, we introduce  preliminary results about the Lipschitz regularity of the CRPS and the empirical risk.
\begin{lemma} For all $F_1,F_2\in\mathcal{P}_1(\mathbb{R})$ and $y\in\mathbb{R}$, we have
\[
|S(F_1,y)-S(F_2,y)|\leq 2 W_1(F_1,F_2)
\]
\end{lemma}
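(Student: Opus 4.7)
The plan is to work directly from the CDF-based definition of the CRPS given in Eq.~\eqref{eq:CRPS}. Writing the difference
\[
S(F_1,y)-S(F_2,y) = \int_{\mathbb{R}} \Bigl[(\mathds{1}_{\{y\leq z\}}-F_1(z))^2-(\mathds{1}_{\{y\leq z\}}-F_2(z))^2\Bigr]\mathrm{d}z
\]
and applying the factorization $a^2-b^2=(a-b)(a+b)$ to the integrand, the difference of squares collapses into a product
\[
(F_2(z)-F_1(z))\bigl(2\,\mathds{1}_{\{y\leq z\}}-F_1(z)-F_2(z)\bigr).
\]

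The second factor is uniformly bounded: since $\mathds{1}_{\{y\leq z\}}\in\{0,1\}$ and $F_1(z),F_2(z)\in[0,1]$, the quantity $2\,\mathds{1}_{\{y\leq z\}}-F_1(z)-F_2(z)$ lies in $[-2,2]$. Taking absolute values inside the integral therefore gives
\[
|S(F_1,y)-S(F_2,y)|\leq 2\int_{\mathbb{R}}|F_1(z)-F_2(z)|\,\mathrm{d}z.
\]

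To conclude I would invoke the classical Dall'Aglio/Vallender identity for the 1-Wasserstein distance on $\mathbb{R}$, namely $W_1(F_1,F_2)=\int_{\mathbb{R}}|F_1(z)-F_2(z)|\,\mathrm{d}z$, which holds for any $F_1,F_2\in\mathcal{P}_1(\mathbb{R})$. Substituting this identity delivers the claimed bound $|S(F_1,y)-S(F_2,y)|\leq 2W_1(F_1,F_2)$. There is essentially no obstacle here: the algebraic trick and the trivial uniform bound on $2\,\mathds{1}_{\{y\leq z\}}-F_1(z)-F_2(z)$ are the only moves, and the Vallender formula is standard. (An alternative route via the kernel representation $S(F,y)=\mathbb{E}|Z-y|-\tfrac12\mathbb{E}|Z-Z'|$ together with Kantorovich–Rubinstein duality applied to the $1$-Lipschitz map $z\mapsto|z-y|$ and to an optimal product coupling would give the same constant $2$, but the CDF route is the shortest.)
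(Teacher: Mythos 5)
Your proof is correct and follows essentially the same route as the paper: the difference-of-squares factorization applied to the CDF-based form of the CRPS, the uniform bound of $2$ on the factor $2\,\mathds{1}_{\{y\leq z\}}-F_1(z)-F_2(z)$, and the $L^1$-of-CDFs identity for $W_1$ on $\mathbb{R}$ (which the paper cites via Bobkov--Ledoux rather than Vallender, but it is the same classical fact). No gaps.
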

This Lemma states that the CRPS is 2-Lipschitz in the first variable with respect to the Wasserstein distance $W_1$.
\begin{proof} By the definition~\eqref{eq:CRPS} of the CRPS,
\[
S(F_1,y)-S(F_2,y)=\int_{\mathbb{R}}\Big((\mathds{1}_{\{y\leq z\}}-F_1(z))^2- (\mathds{1}_{\{y\leq z\}}-F_2(z))^2\Big)dz. 
\]
Using $a^2-b^2= (a-b)(a+b)$,  we get
\begin{align*}
\left|S(F_1,y)-S(F_2,y)\right|&\leq \int_{\mathbb{R}}|F_1(z)-F_2(z)||F_1(z)+F_2(z)-2\mathds{1}_{\{z\leq y\}}|dz\\
&\leq 2\int_{\mathbb{R}}|F_1(z)-F_2(z)|dz.
\end{align*}
We recognize the expression of the Wasserstein distance of order $1$ for probability measures on $\mathbb{R}$ as the $L^1$-distance between their cdf, i.e. the formula
\[
\int_{\mathbb{R}}|F_1(z)-F_2(z)|dz=W_1(F_1,F_2),
\] 
see~\citet{Bobkov_Ledoux16}. The result follows. 
\end{proof}
We can deduce that, under Assumption~\ref{ass:regularity}, the theoretical and empirical risks are also Lipschitz continuous.
\begin{proposition}\label{prop:risk-Lipschitz}
Under Assumption~\ref{ass:regularity}, for all $\theta_1,\theta_2\in\Theta$,
\[
|\mathcal{R}(F_{\theta_1})-\mathcal{R}(F_{\theta_2})|\leq2 L \|\theta_1-\theta_2\|
\]
and
\[
|\hat{\mathcal{R}}_n(F_{\theta_1})-\hat{\mathcal{R}}_n(F_{\theta_2})|\leq2 L \|\theta_1-\theta_2\|. 
\]
\end{proposition}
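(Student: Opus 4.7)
The plan is to combine the preceding 2-Lipschitz bound on the CRPS with the $W_1$-Lipschitz regularity of the parametric family from Assumption~\ref{ass:regularity}, then pass to expectation (resp.\ empirical average) by Jensen/triangle inequality.

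First, for any fixed $(x,y)\in\mathbb{R}^d\times\mathbb{R}$, I would apply the preceding Lemma to $F_1=F_{\theta_1,x}$ and $F_2=F_{\theta_2,x}$ to get
\[
|S(F_{\theta_1,x},y)-S(F_{\theta_2,x},y)|\leq 2\,W_1(F_{\theta_1,x},F_{\theta_2,x}).
\]
Then Assumption~\ref{ass:regularity} immediately upgrades the right-hand side to $2L\|\theta_1-\theta_2\|$, giving a uniform (in $(x,y)$) pointwise Lipschitz bound on the loss.

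For the theoretical risk I would write
\[
|\mathcal{R}(F_{\theta_1})-\mathcal{R}(F_{\theta_2})|=\bigl|\mathbb{E}[S(F_{\theta_1,X},Y)-S(F_{\theta_2,X},Y)]\bigr|\leq \mathbb{E}\bigl[|S(F_{\theta_1,X},Y)-S(F_{\theta_2,X},Y)|\bigr],
\]
and plug in the pointwise bound, which yields $2L\|\theta_1-\theta_2\|$. For the empirical risk, the argument is identical with the expectation replaced by $\tfrac{1}{n}\sum_{i=1}^n$ and the triangle inequality on the finite sum.

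There is no real obstacle here: the statement is a direct pushforward of the pointwise Lipschitz property through the linear operations $\mathbb{E}[\cdot]$ and $\tfrac{1}{n}\sum\cdot$. The only things worth being careful about are (i) that the pointwise bound holds uniformly in $x$ (this is exactly the form of Assumption~\ref{ass:regularity}, where $L$ does not depend on $x$), so that integrating in $x$ preserves the constant, and (ii) that $m_1(F_{\theta,X})<\infty$ almost surely so that $S(F_{\theta,X},Y)$ is integrable and the expectations above are well defined; this follows from Assumption~\ref{ass:subgaussian}-$ii)$ when needed, but for the Lipschitz statement alone the bound on the difference is what matters.
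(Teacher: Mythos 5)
Your proposal is correct and follows essentially the same route as the paper: apply the 2-Lipschitz property of the CRPS in $W_1$, invoke Assumption~\ref{ass:regularity}, and pass through $\mathbb{E}[\cdot]$ or the empirical average by the triangle inequality. The only cosmetic difference is that you apply the assumption before taking the expectation while the paper applies it after, which is immaterial since the Lipschitz constant $L$ is uniform in $x$.
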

\begin{proof}
    Using the definition of the theoretical risk and the Lipschitz continuity of the CRPS, we have
    \begin{align*}
    |\mathcal{R}(F_{\theta_1})-\mathcal{R}(F_{\theta_2})|&= |\mathbb{E}[S(F_{\theta_1,X},Y)-S(F_{\theta_2,X},Y) ]|\\
    &\leq \mathbb{E}[|S(F_{\theta_1,X},Y)-S(F_{\theta_2,X},Y)|]\\
    &\leq 2\mathbb{E}[W_1(F_{\theta_1,X},F_{\theta_2,X})].
    \end{align*}
    Then, Assumption~\ref{ass:regularity} implies
    \[
|\mathcal{R}(F_{\theta_1})-\mathcal{R}(F_{\theta_2})\|\leq 2 L \|\theta_1-\theta_2\|.
    \]
    Similarly for the empirical risk,
    \begin{align*}
    |\hat{\mathcal{R}}_n(F_{\theta_1})-\hat{\mathcal{R}}_n(F_{\theta_2})|&\leq \frac{1}{n}\sum_{i=1}^n \Big|S(F_{\theta_1,X_i},Y_i)-S(F_{\theta_2,X_i},Y_i) ]\Big|\\
    &\leq \frac{2}{n}\sum_{i=1}^n W_1(F_{\theta_1,X_i},F_{\theta_2,X_i})\\
    &\leq 2 L \|\theta_1-\theta_2\|.
    \end{align*}
\end{proof}

We are now ready for the proof of Theorem~\ref{thm:estimation-error}.
\begin{proof}[Proof of Theorem~\ref{thm:estimation-error}]
By compactness of $\Theta$ and continuity of $\theta\mapsto \mathcal{R}(\theta)$, the theoretical risk reaches a minimum on $\Theta$ and we can define
$\theta^*=\mathop{\mathrm{arg\,min}}_{\theta\in\Theta}\mathcal{R}(F_\theta)$. Similarly, $\hat\theta_n=\mathop{\mathrm{arg\,min}}_{\theta\in\Theta}\hat{\mathcal{R}}_n(F_\theta)$ is well defined. 
The estimation error can then be decomposed into three terms
\begin{align}\label{eq: bound error model Fitting}
&\mathcal{R}(F_{\hat\theta_n})-\inf_{\theta\in\Theta} \mathcal{R}(F_{\theta})\notag\\
&=\mathcal{R}(F_{\hat\theta_n})-\mathcal{R}(F_{\theta^*})\notag\\
&=\Big(\mathcal{R}(F_{\hat\theta_n})-\hat{\mathcal{R}}_n(F_{\hat\theta_n})\Big)+\Big(\hat{\mathcal{R}}_n(F_{\hat\theta_n})-\hat{\mathcal{R}}_n(F_{\theta^*})\Big)+\Big(\hat{\mathcal{R}}_n(F_{\theta^*})-\mathcal{R}(F_{\theta^*})\Big)
\end{align}
By definition of $\hat\theta_n$, the second term is non-positive and we deduce 
\begin{equation}\label{eq:bound-error-estimation}
\Big| \mathcal{R}(F_{\hat\theta_n})-\inf_{\theta\in\Theta} \mathcal{R}(F_{\theta})\Big|\leq  2\sup_{\theta\in \Theta}\left|\hat{\mathcal{R}}_{n}\left(F_{\theta}\right)-\mathcal{R}\left(F_{\theta}\right)\right|.
\end{equation}
Since $\Theta$ is compact, it can be included in some centered closed Euclidean ball $\bar B_R$ with radius $R>0$. Therefore, for all $\epsilon\leq R$, there exists an $\epsilon$-net $\Theta_{\epsilon}$ of $\Theta$ such that $\mathrm{card}(\Theta_{\epsilon})\leq \left(\frac{3R}{\epsilon}\right)^K$, see \citet{Devroye_Gyorfi_Lugosi96}. The term $\epsilon$-net means that, for all $\theta \in \Theta$, there exists $\theta_{\epsilon}\in \Theta_{\epsilon}$ such that $\|\theta-\theta_{\epsilon}\|\leq \epsilon$. 
Now, for all $\theta \in \Theta$, we introduce the decomposition
\begin{equation*}
\left|\hat{\mathcal{R}}_{n}\left(F_{\theta}\right)-\mathcal{R}\left(F_{\theta}\right)\right|\leq \left|\hat{\mathcal{R}}_{n}\left(F_{\theta}\right)-\hat{\mathcal{R}}_{n}\left(F_{\theta_{\epsilon}}\right)\right|+\left|\hat{\mathcal{R}}_{n}\left(F_{\theta_{\epsilon}}\right)-\mathcal{R}\left(F_{\theta_{\epsilon}}\right)\right|+ \left|\mathcal{R}\left(F_{\theta_{\epsilon}}\right)-\mathcal{R}\left(F_{\theta}\right)\right|.
\end{equation*}
By the Lipschitz properties stated in Proposition~\ref{prop:risk-Lipschitz}, the first and third terms are bounded from above by $2L\epsilon$.
Therefore, we  deduce 
\begin{equation}\label{eq:bound-error-estimation-2}
2\sup_{\theta\in \Theta}\left|\hat{\mathcal{R}}_{n}\left(F_{\theta}\right)-\mathcal{R}\left(F_{\theta}\right)\right|\leq  8L\epsilon +2\sup_{\theta\in \Theta_\epsilon}\left|\hat{\mathcal{R}}_{n}\left(F_{\theta}\right)-\mathcal{R}\left(F_{\theta}\right)\right|,
\end{equation}
which implies, for $t\geq 16L \epsilon$,
\[
\mathbb{P}\left(2\sup_{\theta\in \Theta}\left|\hat{\mathcal{R}}_{n}\left(F_{\theta}\right)-\mathcal{R}\left(F_{\theta}\right)\right|\geq t\right) \leq \mathbb{P}\left(\sup_{\theta\in \Theta_\epsilon}\left|\hat{\mathcal{R}}_{n}\left(F_{\theta}\right)-\mathcal{R}\left(F_{\theta}\right)\right|\geq t/4\right).
\]
We then use Corollary~\ref{cor:empirical-risk-subgaussian} stating that, for all $\theta \in \Theta$,
 \[
 \mathbb{P}\left(\Big|\hat{\mathcal{R}}_n(F_\theta)-\mathcal{R}(F_\theta) \Big|\geq t\right) \leq 2\exp\left(-\frac{nt^2}{2\beta^2}\right)
  \]
  with $\beta=\sqrt{2(\beta_1^2+\beta_2^2)}$.
The union bound and the inequality $\mathrm{card}(\Theta_{\epsilon})\leq \left(\frac{3R}{\epsilon}\right)^K$ imply, for $t\geq 16L \epsilon$,    
\begin{align*}
\mathbb{P}\left(2\sup_{\theta\in \Theta}\left|\hat{\mathcal{R}}_{n}\left(F_{\theta}\right)-\mathcal{R}\left(F_{\theta}\right)\right|\geq t\right) &\leq  \sum_{\theta\in \Theta_\epsilon}\mathbb{P}\left(\left|\hat{\mathcal{R}}_{n}\left(F_{\theta}\right)-\mathcal{R}\left(F_{\theta}\right)\right|\geq t/4\right)\\
&\leq  2\,\left(\frac{3R}{\epsilon}\right)^K\,\exp\left(-\frac{nt^2}{32\beta^2}\right).
\end{align*}
Setting $\epsilon =3R/n$ in this inequality and using Eq.~\eqref{eq:bound-error-estimation}, we deduce
\begin{equation}\label{eq:concentration-bound}
     \mathbb{P}\left(\Big| \mathcal{R}(F_{\hat\theta_n})-\inf_{\theta\in\Theta} \mathcal{R}(F_{\theta})\Big|\geq t\right) \leq 2n^K \,\exp\left(-\frac{nt^2}{32\beta^2}\right),\quad t\geq \frac{48 LR}{n}.
\end{equation}
The right hand side is equal to $\delta$ for $t=\sqrt{32\beta^2\log(2n^K/\delta)/n}$, yielding the result. Note that we need  this specific choice  of $t$ to satisfy $t\geq 48 LR/n$, or equivalently $n\log(2 n^K/\delta)\geq (48 LR)^2/c_\beta$ with $c_\beta =64(\beta_1^2+\beta_2^2)$.
\end{proof}

\begin{proof}[Proof of Corollary~\ref{cor:estimation-error-model-fitting}]
Setting $a=\log(2n^K)$ and $b=n/c_\beta$, Eq.~\eqref{eq:concentration-bound} can be rewritten as
\[
\mathbb{P}\left(\Big| \mathcal{R}(F_{\hat\theta_n})-\inf_{\theta\in\Theta} \mathcal{R}(F_{\theta})\Big|\geq t\right) \leq\exp\left(a-bt^2\right),\quad t\geq 48 LR/n.
\]
For $t\leq \sqrt{a/b}$, the inequality is trivial because the right hand side is larger than $1$. We deduce that the inequality holds for all $t\geq 0$ as soon as $48 LR/n\leq \sqrt{a/b}$ which is equivalent to $n\log(2 n^K)\geq (48 LR)^2/c_\beta$, which holds for $n$ large enough. Then we can apply Lemma~\ref{lem:technProba} and deduce the upper bound $2\sqrt{a/b}$ for the expectation of the estimation error.
\end{proof}

\section{Proof of the results of Section~\ref{sec:results-model-selection-aggregation}}\label{app:proof-model-selection-aggregation}

\begin{proof}[Proof of Theorem~\ref{thm:regret-model-selection}]
A straightforward adaptation of the proof of Proposition~\ref{prop:CRPS-subgaussian} shows that Assumption~\ref{ass:subgaussian2} implies that the random variables $S(\hat F_{n,X}^m,Y)$, $m=1,\ldots, M$,  are $\beta$-sub-Gaussian conditionally on $\mathcal{D}_n$ with $\beta=\sqrt{2(\beta_1^2+\beta_n^2)}$. Then, similarly as in Corollary~\ref{cor:empirical-risk-subgaussian}, Hoeffding inequality implies
\begin{equation}\label{eq:concentration-validation-error}
  \mathbb{P}\left(\Big|\hat{\mathcal{R}}_N'(\hat F_n^m)-\mathcal{R}(\hat F_n^m) \Big|\geq t \ \Big|\ \mathcal{D}_n\right) \leq 2\exp\left(-\frac{Nt^2}{4(\beta_1^2+\beta_n^2)}\right)
\end{equation}
for all $t\geq 0$ and $m=1,\ldots,M$. With a similar reasoning as in Eq.~\eqref{eq: bound error model Fitting}, the regret in model selection is bounded from above by
\begin{align}\label{eq: bound error MS}
&\mathcal{R}(\hat F_n^{\hat m})-\min_{1\leq m\leq M} \mathcal{R}(\hat F_n^m)\notag\\
&=\mathcal{R}(\hat F_n^{\hat m})-\mathcal{R}(\hat F_n^{m^*})\notag\\
&=\Big(\mathcal{R}(\hat F_n^{\hat m})-\hat{\mathcal{R}}_N'(\hat F_n^{\hat m})\Big)+\Big(\hat{\mathcal{R}}_N'(\hat F_n^{\hat m})-\hat{\mathcal{R}}_N'(\hat F_n^{m^*})\Big)+\Big(\hat{\mathcal{R}}_N'(\hat F_n^{m^*})-\mathcal{R}(\hat F_n^{m^*})\Big) \notag\\
&\leq 2\max_{1\leq m\leq M} \left|\hat{\mathcal{R}}_N'(\hat F_n^m)-\mathcal{R}(\hat F_n^m)\right|.
\end{align}
This together with the union bound and Eq.~\eqref{eq:concentration-validation-error} implies
\begin{align*}
\mathbb{P}\left(\mathcal{R}(\hat F_n^{\hat m})-\min_{1\leq m\leq M} \mathcal{R}(\hat F_n^m)\geq t\ \Big|\ \mathcal{D}_n\right)&\leq \mathbb{P}\left(\max_{1\leq m\leq M} \left|\hat{\mathcal{R}}_N'(\hat F_n^m)-\mathcal{R}(\hat F_n^m)\right|\geq \frac{t}{2}\ \Big|\ \mathcal{D}_n\right)\\
&\leq \sum_{ m=1}^M \mathbb{P}\left( \left|\hat{\mathcal{R}}_N'(\hat F_n^m)-\mathcal{R}(\hat F_n^m)\right|\geq \frac{t}{2}\ \Big|\ \mathcal{D}_n\right)\\
&\leq 2M\exp\left(-\frac{Nt^2}{16(\beta_1^2+\beta_n^2)}\right).
\end{align*}
The right hand side is equal to $\delta$ for $t=4\sqrt{c_n\log(2M/\delta)/N}$, which proves the first claim. The second claim follows by an application of Lemma~\ref{lem:technProba} with $a=\log(2M)$ and $b=N/(16 c_n)$.
\end{proof}

For the proof of Theorem~\ref{thm:regret-model-aggregation}, we need the following Lemma.
\begin{lemma}\label{lem:wasserstein-mixture}
Consider cumulative distribution functions $(G^m)_{1\leq m\leq M}$ and, for $\lambda\in\Lambda$, the convex aggregation $G^\lambda=\sum_{m=1}^M \lambda_mG^m$. Then  the following Lipschitz property is satisfied: for all $\lambda_1,\lambda_2\in\Lambda$,
\[
W_1(G^{\lambda_1},G^{\lambda_2})\leq \max_{1\leq m\leq M} m_1(G^m)\, \|\lambda_1-\lambda_2\|_1
\]
  \end{lemma}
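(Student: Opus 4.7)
The plan is to use the $L^1$-representation of the Wasserstein distance on $\mathbb{R}$, namely $W_1(F,G)=\int_{\mathbb{R}}|F(z)-G(z)|dz$, already invoked in the proof of the Lipschitz property of the CRPS. Applied to the two mixtures, it gives
\[
W_1(G^{\lambda_1},G^{\lambda_2})=\int_{\mathbb{R}}\Big|\sum_{m=1}^M(\lambda_{1,m}-\lambda_{2,m})G^m(z)\Big|\, dz.
\]

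The main observation is that $\sum_{m=1}^M(\lambda_{1,m}-\lambda_{2,m})=0$ since $\lambda_1,\lambda_2\in\Lambda_M$. Therefore, for any function $c(z)$ not depending on $m$, we can freely subtract it inside the sum. The natural choice is $c(z)=\mathds{1}_{\{z\geq 0\}}$, which replaces each $G^m(z)$ by the centered quantity $G^m(z)-\mathds{1}_{\{z\geq 0\}}$. The point of this subtraction is that the resulting integrand is absolutely integrable for each $m$, with
\[
\int_{\mathbb{R}}\big|G^m(z)-\mathds{1}_{\{z\geq 0\}}\big|\, dz=\int_{-\infty}^0 G^m(z)\, dz +\int_0^{+\infty}\big(1-G^m(z)\big)\,dz=m_1(G^m),
\]
a classical identity for the absolute first moment (valid for any real-valued distribution with finite first moment).

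Combining the two steps, I apply the triangle inequality under the integral and then Fubini/Tonelli to exchange the finite sum and the integral:
\[
W_1(G^{\lambda_1},G^{\lambda_2})\leq \sum_{m=1}^M|\lambda_{1,m}-\lambda_{2,m}|\int_{\mathbb{R}}\big|G^m(z)-\mathds{1}_{\{z\geq 0\}}\big|\,dz=\sum_{m=1}^M|\lambda_{1,m}-\lambda_{2,m}|\,m_1(G^m).
\]
Bounding each $m_1(G^m)$ by $\max_{1\leq m\leq M} m_1(G^m)$ yields the announced inequality with $\|\lambda_1-\lambda_2\|_1$ on the right-hand side. There is no real obstacle here; the only subtlety is the centering trick by $\mathds{1}_{\{z\geq 0\}}$, which exploits $\sum_m(\lambda_{1,m}-\lambda_{2,m})=0$ and is needed to make the individual integrals finite before applying the triangle inequality.
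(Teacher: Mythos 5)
Your proof is correct and follows essentially the same route as the paper: the $L^1$ representation of $W_1$, the centering by $\mathds{1}_{\{z\geq 0\}}$ justified by $\sum_m(\lambda_{1,m}-\lambda_{2,m})=0$, the identity $\int_{\mathbb{R}}|G^m(z)-\mathds{1}_{\{z\geq 0\}}|\,dz=m_1(G^m)$ (which the paper phrases as $W_1(G^m,\delta_0)=m_1(G^m)$), and the final bound by the maximum. No gaps.
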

  \begin{proof}
In dimension $1$, the Wasserstein distance is given by
\[
W_1(G^{\lambda_1},G^{\lambda_2})=\int_{\mathbb{R}} |G^{\lambda_1}(y)-G^{\lambda_2}(y)|\mathrm{d}y.
\]
  By definition of $G^\lambda$, we deduce
\begin{align*}
W_1(G^{\lambda_1},G^{\lambda_2})&=\int_{\mathbb{R}} |\sum_{m=1}^M (\lambda_{1,m}-\lambda_{2,m}) (G^{m}(y)-\mathds{1}_{\{y\geq 0\}})|\mathrm{d}y\\
&\leq \sum_{m=1}^M |\lambda_{1,m}-\lambda_{2,m}| \int_{\mathbb{R}} |G^{m}(y)-\mathds{1}_{\{y\geq 0\}}|\mathrm{d}y\\
&=\sum_{m=1}^M |\lambda_{1,m}-\lambda_{2,m}|\, m_1(G^m)\\
&\leq \max_{1\leq m\leq M} m_1(G^m)\, \|\lambda_1-\lambda_2\|_1.
\end{align*}
In the first line, we use that $\sum_{m=1}^M(\lambda_{1,m}-\lambda_{2,m}) =0$ so that we can introduce the indicator function $\mathds{1}_{\{y\geq 0\}}$. In the third line, we use  $\int_{\mathbb{R}} |G^{m}(y)-\mathds{1}_{\{y\geq 0\}}|\mathrm{d}y=W_1(G^m,\delta_0)=m_1(G^m)$.
 \end{proof}

\begin{proof}[Proof of Theorem~\ref{thm:regret-model-aggregation}] We work conditionally on $\mathcal{D}_n$ so that $\hat F_n^m$, $1\leq m\leq M,$ can be seen as  deterministic. The empirical risk is computed on the validation set $\mathcal{D}'_N$ with cardinal $N$.  Furthermore, Assumption~\ref{ass:subgaussian2} implies that, conditionally on $\mathcal{D}_n$, $Y$ and $\hat{F}_n^\lambda$ satisfy Assumption~\ref{ass:subgaussian} with $\Theta$ replaced by $\Lambda$ and constant $\beta_2$ replaced by $\beta_n$. Lemma~\ref{lem:wasserstein-mixture}
implies that, conditionally on $\mathcal{D}_n$, $F^\lambda_x$
satisfy Assumption~\ref{ass:regularity} with $\Theta$ replaced by $\Lambda_M$ and constant $L$ replaced by $\sqrt{M}\max_{1\leq m\leq M} m_1(\hat {F}^m_{n,x})$. As a consequence, we can apply Theorem~\ref{thm:estimation-error} and deduce the result.
\end{proof}

\section{Proofs and additional results for Section~\ref{sec:beyond-subgauss}}\label{app:beyond-subgauss}

\subsection{Proof of Theorem~\ref{thm:beyond-subg-estimation-error} }
Without sub-Gaussian assumptions, we cannot use Hoeffding inequality. Alternatively, we use a consequence of Rosenthal inequality providing a simple control of moments. For a reference, see~\citet[Theorems~2.9 and~2.10]{petrov1995limit}. 
\begin{proposition}\label{prop:rosenthal-inequality}
Let $Z,Z_1,\ldots, Z_n$ be i.i.d. random variables such that $m_p(Z)=\mathbb{E}[|Z-\mathbb{E}[Z]|^p]<\infty$ for some $p\geq 2$. Then, 
\begin{equation*}
\mathbb{E}\bigg[\Big|\frac{1}{n}\sum_{i=1}^n\big(Z_i-\mathbb{E}[Z_i]\big)\Big|^p\bigg]\leq c(p)m_p(Z)n^{-p/2} ,
\end{equation*}
with $c(p)>0$ depending only on $p$.
\end{proposition}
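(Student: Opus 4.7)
My plan is to establish this Rosenthal-type inequality via symmetrization and Khintchine's inequality, which together give a clean route to the Marcinkiewicz--Zygmund bound that is tight in the i.i.d.\ regime.

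First, I would reduce to the centered case. Set $X_i = Z_i - \mathbb{E}[Z_i]$, so the $X_i$ are i.i.d., centered, and satisfy $\mathbb{E}[|X_i|^p]=m_p(Z)$. Since the factor $1/n^p$ can be pulled out, it is enough to prove
$$\mathbb{E}\Big[\Big|\sum_{i=1}^n X_i\Big|^p\Big] \leq C(p)\, n^{p/2}\, m_p(Z).$$
Let $(X_i')$ be an independent copy of $(X_i)$ and $(\varepsilon_i)$ independent Rademacher signs. Because $\mathbb{E}[X_i']=0$, Jensen's inequality gives the standard symmetrization bound
$$\mathbb{E}\Big[\Big|\sum_i X_i\Big|^p\Big] \leq \mathbb{E}\Big[\Big|\sum_i (X_i-X_i')\Big|^p\Big] = \mathbb{E}\Big[\Big|\sum_i \varepsilon_i(X_i-X_i')\Big|^p\Big],$$
where the last equality uses that $X_i-X_i'$ is symmetric and hence has the same law as $\varepsilon_i(X_i-X_i')$.

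Second, I would apply Khintchine's inequality conditionally on $(X_i,X_i')_{i\leq n}$: for some $K_p>0$ depending only on $p$,
$$\mathbb{E}_{\varepsilon}\Big[\Big|\sum_i \varepsilon_i a_i\Big|^p\Big] \leq K_p \Big(\sum_i a_i^2\Big)^{p/2}$$
for any reals $a_i$. Setting $a_i = X_i-X_i'$ and taking the outer expectation yields
$$\mathbb{E}\Big[\Big|\sum_i X_i\Big|^p\Big] \leq K_p\, \mathbb{E}\Big[\Big(\sum_i (X_i-X_i')^2\Big)^{p/2}\Big].$$

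Third, since $p\geq 2$, I would use Minkowski's inequality in $L^{p/2}$ on the sum of nonnegative terms:
$$\Big\|\sum_i (X_i-X_i')^2\Big\|_{L^{p/2}} \leq \sum_i \big\|(X_i-X_i')^2\big\|_{L^{p/2}} = \sum_i \|X_i-X_i'\|_{L^p}^2 \leq n\cdot (2\,m_p(Z)^{1/p})^2,$$
where the last inequality uses $\|X_i-X_i'\|_{L^p}\leq 2\|X_i\|_{L^p}=2\,m_p(Z)^{1/p}$. Raising to the power $p/2$ gives
$$\mathbb{E}\Big[\Big(\sum_i (X_i-X_i')^2\Big)^{p/2}\Big] \leq 4^{p/2}\, n^{p/2}\, m_p(Z).$$
Combining the three steps and dividing by $n^p$ yields the claim with $c(p)=4^{p/2}K_p$.

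The main technical point is arranging the exponents so that the final rate is $n^{-p/2}$ and not merely $n^{-1}$: symmetrization plus Khintchine reduces the $p$-th moment of the sum to the $(p/2)$-th moment of the quadratic variation, which is a sum of $n$ nonnegative variables; Minkowski in $L^{p/2}$ (which requires $p\geq 2$, exactly the hypothesis) then delivers the linear-in-$n$ control that becomes $n^{p/2}$ after raising to the $p/2$ power. Any weaker inequality, such as a crude Hölder bound on $\sum X_i^2$, would cost the desired rate.
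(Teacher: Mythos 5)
Your proof is correct, and it is genuinely different from what the paper does: the paper offers no proof at all for this proposition, simply invoking Rosenthal's inequality via a citation to Petrov (Theorems 2.9 and 2.10), whereas you give a self-contained derivation of the Marcinkiewicz--Zygmund bound through symmetrization, conditional Khintchine, and Minkowski in $L^{p/2}$. Each step checks out: the Jensen-based symmetrization uses only $\mathbb{E}[X_i']=0$ and convexity of $|\cdot|^p$; the distributional identity with Rademacher signs uses the symmetry of $X_i-X_i'$; and the triangle inequality in $L^{p/2}$ (valid precisely because $p\geq 2$) converts the quadratic variation into $n\cdot 4\,m_p(Z)^{2/p}$, which after raising to the power $p/2$ and dividing by $n^p$ gives exactly the claimed $n^{-p/2}$ rate with $c(p)=2^pK_p$. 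The trade-off between the two routes is minor here: Rosenthal's inequality, $\mathbb{E}|\sum X_i|^p\leq C_p\big(\sum\mathbb{E}|X_i|^p+(\sum\mathbb{E}X_i^2)^{p/2}\big)$, is sharper in general (it separates the linear-in-$n$ term from the $n^{p/2}$ term), but in the i.i.d.\ setting with $p\geq2$ the Lyapunov bound $\mathbb{E}[X^2]\leq m_p(Z)^{2/p}$ collapses it to the same $n^{p/2}m_p(Z)$ order, so your more elementary argument loses nothing for the purposes of the paper while making the constant explicit and the proposition independent of the external reference.
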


We deduce a simple moment bound for the empirical risk.
\begin{proposition}\label{prop:empirical-risk-moment}
    Under Assumption~\ref{ass:moment}, we have, for all $\theta\in\Theta$ 
    \[
     \mathbb{E}\left[\Big|\hat{\mathcal{R}}_n(F_\theta)-\mathcal{R}(F_\theta) \Big|^p \right] \leq c'(p)D n^{-p/2}
    \]
with $c'(p)=4^pc(p)$ depending only on $p$.
\end{proposition}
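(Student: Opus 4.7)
The plan is to reduce the statement to a direct application of Rosenthal's inequality (Proposition~\ref{prop:rosenthal-inequality}) to the i.i.d.\ sample $Z_{\theta,i}=S(F_{\theta,X_i},Y_i)$, $1\leq i\leq n$, whose empirical mean is exactly $\hat{\mathcal{R}}_n(F_\theta)$ and whose common expectation is $\mathcal{R}(F_\theta)$. The only non-trivial task is then to bound the centered $p$-th moment $m_p(Z_\theta)=\mathbb{E}[|Z_\theta-\mathbb{E}[Z_\theta]|^p]$ by a constant multiple of $D$.

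To bound this moment, I would first use the CRPS upper bound already established in Eq.~\eqref{eq:crps-bound}, namely $0\leq Z_\theta\leq |Y|+m_1(F_{\theta,X})$. Assumption~\ref{ass:moment} gives $\|Y\|_{L^p}\leq D^{1/p}$ and $\|m_1(F_{\theta,X})\|_{L^p}\leq D^{1/p}$, so Minkowski's inequality yields $\|Z_\theta\|_{L^p}\leq 2D^{1/p}$. A second application of Minkowski (or the centering bound $\|Z_\theta-\mathbb{E}[Z_\theta]\|_{L^p}\leq 2\|Z_\theta\|_{L^p}$) then gives
\[
m_p(Z_\theta)=\|Z_\theta-\mathbb{E}[Z_\theta]\|_{L^p}^p \leq (4D^{1/p})^p = 4^p D.
\]

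Plugging this into Rosenthal's inequality yields
\[
\mathbb{E}\Big[\big|\hat{\mathcal{R}}_n(F_\theta)-\mathcal{R}(F_\theta)\big|^p\Big] \leq c(p)\,m_p(Z_\theta)\,n^{-p/2} \leq c(p)\cdot 4^p D\cdot n^{-p/2},
\]
which is exactly the target bound with $c'(p)=4^p c(p)$.

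There is no real obstacle here: the whole proof is bookkeeping around Minkowski and the earlier pointwise CRPS bound. The only subtle step, if anything, is remembering that Eq.~\eqref{eq:crps-bound} (established in the proof of Proposition~\ref{prop:CRPS-subgaussian}) holds without any sub-Gaussian assumption, so it is legitimately available in the weaker moment setting of Assumption~\ref{ass:moment}.
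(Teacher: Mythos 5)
Your proof is correct and follows essentially the same route as the paper: bound $Z_\theta=S(F_{\theta,X},Y)$ via Eq.~\eqref{eq:crps-bound}, deduce $m_p(Z_\theta)\leq 4^pD$, and conclude with Proposition~\ref{prop:rosenthal-inequality}. The only cosmetic difference is that you use Minkowski's inequality in $L^p$ where the paper uses the elementary bound $|a+b|^p\leq 2^{p-1}(|a|^p+|b|^p)$ together with Jensen's inequality for the centering step; both yield the same constant $c'(p)=4^pc(p)$.
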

\begin{proof} Eq.~\eqref{eq:crps-bound} together with the upper bound $|a+b|^p\leq 2^{p-1}(|a|^p+|b|^p)$ imply
\[
|S(F_{\theta,X},Y)|^p\leq \big(|Y|+m_1(F_{\theta,X})\big)^p \leq 2^{p-1}\big(|Y|^p+m_1(F_{\theta,X})^p\big).
\]
Taking the expectation and using Assumption~\ref{ass:moment}, we deduce, with the notation $Z_\theta=S(F_{\theta,X},Y)$,
\[
\mathbb{E}[|Z_\theta|^p]\leq  2^{p-1}\big(|Y|^p+m_1(F_{\theta,X})^p\big) \leq 2^p D.
\]
By Jensen inequality, this implies the upper bound
\[
\mathbb{E}\big[|Z_\theta-\mathbb{E}[Z_\theta]|^p\big]\leq 4^p D.
\]
The centered empirical risk  can be written as
\[
\hat{\mathcal{R}}_n(F_\theta)-\mathcal{R}(F_\theta)=\frac{1}{n}\sum_{i=1}^n \Big(Z_{\theta,i}-\mathbb{E}[Z_{\theta,i}]\Big)
\]
with $Z_{\theta,i}=S(F_{\theta,X_i},Y_i)$, $1\leq i\leq n$, i.i.d. random variables. Then the result follows from an application of Proposition~\ref{prop:rosenthal-inequality}.
\end{proof}

\begin{proof}[Proof of Theorem~\ref{thm:beyond-subg-estimation-error}]
We use the same notation as in the proof of Theorem~\ref{thm:estimation-error}. The beginning of the proof follows exactly the same lines: according to Eq.~\ref{eq:bound-error-estimation}-\ref{eq:bound-error-estimation-2}, for all $\epsilon$-net $\Theta_\epsilon\subset\Theta$, we have
\begin{equation}\label{eq:bound-error-epsilon net}
\Big| \mathcal{R}(F_{\hat\theta_n})-\inf_{\theta\in\Theta} \mathcal{R}(F_{\theta})\Big|\leq 8L\epsilon +2\sup_{\theta\in \Theta_\epsilon}\left|\hat{\mathcal{R}}_{n}\left(F_{\theta}\right)-\mathcal{R}\left(F_{\theta}\right)\right|.
\end{equation}
This implies
\[
\mathbb{E}\Big[\Big| \mathcal{R}(F_{\hat\theta_n})-\inf_{\theta\in\Theta} \mathcal{R}(F_{\theta})\Big|^p \Big]\leq 2^{p-1}\Big( (8L\epsilon)^p+ 2^p\mathbb{E}\Big[\sup_{\theta\in \Theta_\epsilon}\left|\hat{\mathcal{R}}_{n}\left(F_{\theta}\right)-\mathcal{R}\left(F_{\theta}\right)\right|^p\Big]\Big).
\]
The expectation in the right hand side is bounded from above by
\begin{align*}
\mathbb{E}\Big[\sup_{\theta\in \Theta_\epsilon}\left|\hat{\mathcal{R}}_{n}\left(F_{\theta}\right)-\mathcal{R}\left(F_{\theta}\right)\right|^p\Big]&\leq \sum_{\theta\in\Theta_\epsilon}\mathbb{E}\Big[\left|\hat{\mathcal{R}}_{n}\left(F_{\theta}\right)-\mathcal{R}\left(F_{\theta}\right)\right|^p\Big]\\
&\leq (3R)^K\epsilon^{-K}c'(p)Dn^{-p/2}
\end{align*}
where the last inequality follows from Proposition~\ref{prop:empirical-risk-moment} and the bound $\mathrm{card}(\Theta_{\epsilon})\leq (3R/\epsilon)^K$. Hence we deduce
\[
\mathbb{E}\Big[\Big| \mathcal{R}(F_{\hat\theta_n})-\inf_{\theta\in\Theta} \mathcal{R}(F_{\theta})\Big|^p \Big]\leq 2^{p-1}\Big( (8L\epsilon)^p+ 2^p(3R)^K\epsilon^{-K}c'(p)D n^{-p/2}\Big).
\]
Minimizing the right hand side with respect to $\epsilon$ according to Lemma~\ref{lemma:p+q} below and taking the $p$-th root, we obtain
\[
\mathbb{E}\Big[\Big| \mathcal{R}(F_{\hat\theta_n})-\inf_{\theta\in\Theta} \mathcal{R}(F_{\theta})\Big|^p \Big]^{1/p}\leq  Cn^{-p/(2(p+K))}
\]
where the constant $C=C(K,p,L,D,R)$ does not depend on $n$ and can be made explicit thanks to Lemma~\ref{lemma:p+q}. Finally, Jensen's inequality yields the result.
\end{proof}
\begin{lemma}\label{lemma:p+q}
    For all $a,b>0$ and $p,q>0$, 
    \[
    \inf_{\epsilon>0} \big(a\epsilon^p+b\epsilon^{-q}\big )=C_{p,q}a^{\frac{q}{p+q}}b^{\frac{p}{p+q}}, 
    \]
    with $C_{p,q}=\left(\frac{p}{q}\right)^{\frac{q}{p+q}}+\left(\frac{q}{p}\right)^{\frac{p}{p+q}}$.
\end{lemma}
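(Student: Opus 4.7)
The function $f(\epsilon)=a\epsilon^p+b\epsilon^{-q}$ is smooth and strictly convex on $(0,\infty)$ (each term has positive second derivative there), and it tends to $+\infty$ as $\epsilon\to 0^+$ or $\epsilon\to +\infty$. Hence it attains a unique global minimum at the critical point given by $f'(\epsilon)=0$, that is $pa\epsilon^{p-1}=qb\epsilon^{-q-1}$. Solving this equation gives $\epsilon^\ast=\left(qb/(pa)\right)^{1/(p+q)}$, and I would substitute this back into $f$ and simplify.

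Plugging in yields $a(\epsilon^\ast)^p = a^{q/(p+q)}b^{p/(p+q)}(q/p)^{p/(p+q)}$ and $b(\epsilon^\ast)^{-q}=a^{q/(p+q)}b^{p/(p+q)}(p/q)^{q/(p+q)}$, and summing these two contributions extracts exactly the common factor $a^{q/(p+q)}b^{p/(p+q)}$ multiplied by the constant $C_{p,q}=(p/q)^{q/(p+q)}+(q/p)^{p/(p+q)}$.

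The whole argument is elementary single-variable calculus together with manipulation of fractional exponents; the only mildly delicate step is keeping track of the exponents in the substitution, which reduces to the identities $1-p/(p+q)=q/(p+q)$ and $1-q/(p+q)=p/(p+q)$. An alternative, essentially equivalent route would be to invoke the weighted AM--GM inequality with weights $q/(p+q)$ and $p/(p+q)$ applied to the pair $(a\epsilon^p/\alpha,b\epsilon^{-q}/\beta)$, which directly gives the lower bound with equality at $\epsilon^\ast$; I expect no real obstacle either way, so I would just present the calculus derivation for clarity.
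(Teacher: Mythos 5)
Your proof takes the same route as the paper's (which simply notes that the derivative of $\epsilon\mapsto a\epsilon^p+b\epsilon^{-q}$ vanishes at $\epsilon^\ast=(qb/(pa))^{1/(p+q)}$ where the minimum is attained), and your substitution and exponent bookkeeping are correct. One small inaccuracy: the claim that each term has positive second derivative, hence that $f$ is strictly convex, fails when $0<p\leq 1$ (the term $a\epsilon^p$ is then concave, and the sum need not be convex), and the lemma is stated for all $p,q>0$. The conclusion is unaffected because one can argue directly from the sign of the derivative: writing $f'(\epsilon)=\epsilon^{-q-1}\left(pa\epsilon^{p+q}-qb\right)$ shows $f'$ is negative on $(0,\epsilon^\ast)$ and positive on $(\epsilon^\ast,\infty)$, so $\epsilon^\ast$ is the unique global minimizer without any convexity appeal. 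With that one-line repair, your argument is complete and coincides with the paper's.
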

\begin{proof}
  A straightforward analysis of the function $\epsilon\mapsto a\epsilon^p+b\epsilon^{-q}$ shows that its derivative vanishes at $\epsilon=\left(\frac{bq}{ap}\right)^{\frac{1}{p+q}}$ where the minimum is reached.
\end{proof}

\subsection{Additional results}\label{app:beyond-subgauss-additional-results}
We provide some additional results to Theorem~\ref{thm:beyond-subg-estimation-error} from Section~\ref{sec:beyond-subgauss} regarding model selection and convex aggregation. Our working assumption in this framework is the following.

\begin{assumption}\label{ass:moment2} For $p\geq 2$,
\begin{itemize}
    \item[$i)$] $\|Y\|_{L^p}^p\leq D$;
    \item[$ii)$] conditionnaly on $\mathcal{D}_n$, $\|m_{1}(\hat{F}_{n,X}^m)\|_{L^p}^p\leq D_n=D(\mathcal{D}_n)$ for all $m=1,\ldots,M$.
\end{itemize}
\end{assumption}

For instance,  one can show  with a reasoning similar to Proposition~\ref{prop:example-DRN} that for the EMOS and DRN models, Assumption~\ref{ass:moment2}  holds as soon as $Y$ and $F$ have a finite moment of order $p$ and $X$ remains bounded. For the KNN and DRF models, a  reasoning similar to Proposition~\ref{prop:example-DRF} shows that the assumption holds with $D_n=\max_{1\leq i\leq n}|Y_i|^p$. 

\paragraph{Model selection.} Here is our result on model selection under moment assumption only.
\begin{theorem}\label{thm:beyond-subgauss-model-selection}
 Under Assumption~\ref{ass:moment2}, we have
\[
\mathbb{E}\Big[\mathcal{R}(\hat F_n^{\hat m})-\min_{1\leq m\leq M} \mathcal{R}(\hat F_n^m)\Big| \mathcal{D}_n\Big]\leq 2  \big(c'(p)\max(D,D_n)M\big)^{1/p} N^{-1/2},
\]    
with $c'(p)$ the constant from Proposition~\ref{prop:empirical-risk-moment}.
\end{theorem}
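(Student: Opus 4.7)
The plan is to mimic the proof of Theorem~\ref{thm:regret-model-selection} verbatim, but to replace the Hoeffding/sub-Gaussian step by the Rosenthal-type $L^p$ moment bound behind Proposition~\ref{prop:empirical-risk-moment}. Working conditionally on $\mathcal{D}_n$, the models $\hat F_n^1,\ldots,\hat F_n^M$ become deterministic, and the standard three-term decomposition used in Eq.~\eqref{eq: bound error MS} still yields
\[
\mathcal{R}(\hat F_n^{\hat m})-\min_{1\leq m\leq M}\mathcal{R}(\hat F_n^m)\;\leq\;2\max_{1\leq m\leq M}\bigl|\hat{\mathcal{R}}_N'(\hat F_n^m)-\mathcal{R}(\hat F_n^m)\bigr|,
\]
since this step used only the definition of $\hat m$ and the triangle inequality.

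Next, I would fix $m$ and control $\hat{\mathcal{R}}_N'(\hat F_n^m)-\mathcal{R}(\hat F_n^m)$ as an empirical mean of $N$ i.i.d.\ centered variables $S(\hat F_{n,X_i'}^m,Y_i')-\mathcal{R}(\hat F_n^m)$, conditionally on $\mathcal{D}_n$. The bound $S(F,y)\leq |y|+m_1(F)$ from Eq.~\eqref{eq:crps-bound} together with $|a+b|^p\le 2^{p-1}(|a|^p+|b|^p)$ gives
\[
\mathbb{E}\bigl[\,|S(\hat F_{n,X'}^m,Y')|^p\;\big|\;\mathcal{D}_n\bigr]\;\leq\;2^{p-1}(D+D_n)\;\leq\;2^p\max(D,D_n),
\]
so Proposition~\ref{prop:rosenthal-inequality} (or the already-packaged Proposition~\ref{prop:empirical-risk-moment}, read on the validation sample) yields, uniformly in $m$,
\[
\mathbb{E}\bigl[\,|\hat{\mathcal{R}}_N'(\hat F_n^m)-\mathcal{R}(\hat F_n^m)|^p\;\big|\;\mathcal{D}_n\bigr]\;\leq\;c'(p)\,\max(D,D_n)\,N^{-p/2}.
\]

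Finally, to combine the $M$ models I would use the crude but sharp-for-this-rate inequality $\max_m a_m\le (\sum_m a_m^p)^{1/p}$ together with Jensen's inequality, giving
\[
\mathbb{E}\Bigl[\max_{1\le m\le M}\bigl|\hat{\mathcal{R}}_N'(\hat F_n^m)-\mathcal{R}(\hat F_n^m)\bigr|\;\Big|\;\mathcal{D}_n\Bigr]\;\leq\;\Bigl(\sum_{m=1}^M \mathbb{E}\bigl[\,|\cdot|^p\,\big|\,\mathcal{D}_n\bigr]\Bigr)^{1/p}\;\leq\;\bigl(c'(p)\max(D,D_n)M\bigr)^{1/p}N^{-1/2}.
\]
Multiplying by the factor $2$ from the regret decomposition delivers the stated bound. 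The only genuinely delicate point is handling the conditioning correctly so that the moment constants $D$ (for $Y$) and $D_n$ (for $m_1(\hat F_{n,X}^m)$) combine into $\max(D,D_n)$ with the right power of $2$; everything else is a mechanical transfer of the sub-Gaussian proof to the $L^p$ setting, with the moment method replacing the exponential one.
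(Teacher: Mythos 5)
Your proposal is correct and follows essentially the same route as the paper: the same regret decomposition from Eq.~\eqref{eq: bound error MS}, the same Rosenthal-based $L^p$ moment bound (packaged in the paper as Proposition~\ref{prop:empirical-risk-moment2}, which transfers Proposition~\ref{prop:empirical-risk-moment} to the validation sample conditionally on $\mathcal{D}_n$ with $D$ replaced by $\max(D,D_n)$), and the same union-over-$M$ plus Jensen step. Your bounding of $\max_m a_m$ by $(\sum_m a_m^p)^{1/p}$ before applying Jensen is just a reordering of the paper's computation, which raises the whole regret to the $p$-th power first; the constants come out identically.
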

Before proving the Theorem, we establish the following result.
    \begin{proposition}\label{prop:empirical-risk-moment2}
Under Assumption~\ref{ass:moment2}, we have, for all $1\leq m\leq M$, 
\[
   \mathbb{E}\left[\Big|\hat{\mathcal{R}}^{'}_N(\hat{F}_{n}^m)-\mathcal{R}(\hat{F}_{n}^m) \Big|^p \ \Big| \ \mathcal{D}_n \right] \leq c'(p)\max(D,D_n)N^{-p/2}.
    \]
\end{proposition}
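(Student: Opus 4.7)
The proof will closely mirror that of Proposition~\ref{prop:empirical-risk-moment}, the only novelty being that everything is carried out conditionally on the training sample $\mathcal{D}_n$. The plan is to fix $\mathcal{D}_n$, treat $\hat F_n^m$ as a deterministic predictive c.d.f., set $Z_i=S(\hat F_{n,X_i'}^m,Y_i')$ for $1\le i\le N$, and note that, conditionally on $\mathcal{D}_n$, these are i.i.d.\ with conditional mean $\mathcal{R}(\hat F_n^m)$ and empirical mean $\hat{\mathcal{R}}_N'(\hat F_n^m)$.

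First I would bound the conditional $p$-th moment of $Z_1$. Applying Eq.~\eqref{eq:crps-bound} gives $|Z_1|\le |Y_1'|+m_1(\hat F_{n,X_1'}^m)$, and the convexity inequality $|a+b|^p\le 2^{p-1}(|a|^p+|b|^p)$ yields
\[
\mathbb{E}\big[|Z_1|^p\,\big|\,\mathcal{D}_n\big]\le 2^{p-1}\Big(\|Y\|_{L^p}^p+\|m_1(\hat F_{n,X}^m)\|_{L^p(\cdot|\mathcal{D}_n)}^p\Big)\le 2^{p-1}(D+D_n)\le 2^p\max(D,D_n),
\]
by Assumption~\ref{ass:moment2}. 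Then, using $|Z_1-\mathbb{E}[Z_1|\mathcal{D}_n]|^p\le 2^{p-1}(|Z_1|^p+|\mathbb{E}[Z_1|\mathcal{D}_n]|^p)$ together with Jensen's inequality, we arrive at the centered conditional moment bound
\[
\mathbb{E}\big[\,|Z_1-\mathbb{E}[Z_1|\mathcal{D}_n]|^p\,\big|\,\mathcal{D}_n\big]\le 4^p\max(D,D_n).
\]

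Next, I would invoke Rosenthal's inequality (Proposition~\ref{prop:rosenthal-inequality}) conditionally on $\mathcal{D}_n$ applied to the i.i.d.\ centered summands $Z_i-\mathbb{E}[Z_i|\mathcal{D}_n]$. This yields
\[
\mathbb{E}\Big[\big|\hat{\mathcal{R}}_N'(\hat F_n^m)-\mathcal{R}(\hat F_n^m)\big|^p\,\Big|\,\mathcal{D}_n\Big]\le c(p)\,4^p\max(D,D_n)\,N^{-p/2}=c'(p)\max(D,D_n)N^{-p/2},
\]
with $c'(p)=4^p c(p)$ as in Proposition~\ref{prop:empirical-risk-moment}, which is the claim.

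There is no genuine obstacle here: the only subtlety is to keep track of the conditioning on $\mathcal{D}_n$ so that Rosenthal's inequality applies to the regular conditional law of the validation sample, which is valid because $\mathcal{D}_N'$ is independent of $\mathcal{D}_n$.
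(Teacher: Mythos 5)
Your proof is correct and follows essentially the same route as the paper: the paper simply observes that, conditionally on $\mathcal{D}_n$, Assumption~\ref{ass:moment2} places $Y$ and $\hat F_n^m$ in the setting of Assumption~\ref{ass:moment} with $D$ replaced by $\max(D,D_n)$ and then invokes Proposition~\ref{prop:empirical-risk-moment}, whereas you unfold that reduction by redoing the moment bound and the conditional application of Rosenthal's inequality explicitly. The constants and the use of the independence of $\mathcal{D}_N'$ from $\mathcal{D}_n$ match the paper's argument.
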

\begin{proof}
We work conditionally on $\mathcal{D}_n$ so that $\hat F_n^m$ can be seen as a deterministic quantity. The empirical risk is computed on the validation set $\mathcal{D}'_N$ with cardinal $N$. Furthermore, Assumption~\ref{ass:moment2} implies that, conditionally on $\mathcal{D}_n$, $Y$ and $\hat{F}_n^m$ satisfy ~\ref{ass:moment} with constant $D$ replaced by $\max(D,D_n)$. Given these remarks, Proposition~\ref{prop:empirical-risk-moment2} follows by an application of Proposition~\ref{prop:empirical-risk-moment} where $F_\theta$ is replaced by $\hat F_n^m$, $\hat{\mathcal{R}}_n$ is replaced by $\hat{\mathcal{R}}'_N$ and the expectation by the conditional expectation given $\mathcal{D}_n$. 
\end{proof}

\begin{proof}[Proof of Theorem~\ref{thm:beyond-subgauss-model-selection}] 
Thanks to Eq.~\eqref{eq: bound error MS},
\[
\mathcal{R}(\hat F_n^{\hat m})-\min_{1\leq m\leq M} \mathcal{R}(\hat F_n^m)\leq 2\max_{1\leq m\leq M} \left|\hat{\mathcal{R}}_N'(\hat F_n^m)-\mathcal{R}(\hat F_n^m)\right|,
\]
whence we deduce
\begin{align*}
\mathbb{E}\Big[\Big|\mathcal{R}(\hat F_n^{\hat m})-\min_{1\leq m\leq M} \mathcal{R}(\hat F_n^m)\Big|^p\Big|\mathcal{D}_n\Big]&\leq 2^p\mathbb{E}\Big[\max_{1\leq m\leq M} \Big|\hat{\mathcal{R}}_N'(\hat F_n^m)-\mathcal{R}(\hat F_n^m)\Big|^p\Big|\mathcal{D}_n\Big]\\
&\leq 2^p \sum_{m=1}^M\mathbb{E}\Big[ \Big|\hat{\mathcal{R}}_N'(\hat F_n^m)-\mathcal{R}(\hat F_n^m)\Big|^p\Big|\mathcal{D}_n\Big]\\
&\leq 2^p M c'(p)\max(D,D_n)N^{-p/2}.
\end{align*}
The result follows by Jensen's inequality.
\end{proof}

\paragraph{Convex aggregation.} Here is our main result on convex aggregation under moment assumption only.
\begin{theorem}\label{thm:beyond-subgauss-convex-aggregation}
Under Assumption~\ref{ass:moment2}, we have
\[
\mathbb{E}\left[\mathcal{R}(\hat F_n^{\hat \lambda})-\inf_{\lambda\in \Lambda_M} \mathcal{R}(\hat F_n^\lambda)\ \Big| \ \mathcal{D}_n \right]\leq C\left(L^K \max(D,D_n)N^{-p/2}\right)^{1/p+K}
\]    
with $L=\sqrt{M}\max_{1\leq m\leq M}m_{1}(\hat{F}_n^m)$ and constant  $C>0$ depending only on $p$ and $K$.
\end{theorem}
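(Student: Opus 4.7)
The plan is to reduce the convex aggregation problem, conditionally on $\mathcal{D}_n$, to an instance of the model-fitting problem of Theorem~\ref{thm:beyond-subg-estimation-error}, with parameter space $\Lambda_M \subset \mathbb{R}^M$, sample size $N$, and appropriately adjusted constants $L$, $D$, $R$. Throughout, one conditions on $\mathcal{D}_n$ so that the base models $\hat F_n^1,\ldots,\hat F_n^M$ may be treated as deterministic maps $x\mapsto \hat F_{n,x}^m$, while the parameter to be estimated by ERM on the validation sample $\mathcal{D}_N'$ is the weight vector $\lambda\in\Lambda_M$.

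First I verify the structural hypotheses of Theorem~\ref{thm:beyond-subg-estimation-error} in this reduced setting. For Assumption~\ref{ass:regularity}, the simplex $\Lambda_M$ is compact and contained in the Euclidean unit ball, so one may take $R=1$ and $K=M$. Combining Lemma~\ref{lem:wasserstein-mixture} with the elementary inequality $\|\lambda_1-\lambda_2\|_1 \leq \sqrt{M}\,\|\lambda_1-\lambda_2\|_2$ yields the Lipschitz constant $L=\sqrt{M}\max_{1\leq m\leq M} m_1(\hat F_n^m)$ stated in the theorem. For the moment hypothesis (Assumption~\ref{ass:moment}), linearity gives $m_1(\hat F_{n,X}^\lambda)=\sum_m \lambda_m\, m_1(\hat F_{n,X}^m)$, and Jensen's inequality (applied to $x\mapsto x^p$, $p\geq 1$) furnishes $m_1(\hat F_{n,X}^\lambda)^p \leq \sum_m \lambda_m\, m_1(\hat F_{n,X}^m)^p$; taking conditional expectation and using Assumption~\ref{ass:moment2}-$ii)$ then gives $\|m_1(\hat F_{n,X}^\lambda)\|_{L^p}^p \leq D_n$ uniformly in $\lambda$, so Assumption~\ref{ass:moment} holds conditionally on $\mathcal{D}_n$ with constant $\max(D,D_n)$.

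Having checked the hypotheses, I invoke Theorem~\ref{thm:beyond-subg-estimation-error} with the substitutions $K\leftarrow M$, $R\leftarrow 1$, $L$ as above, $D\leftarrow \max(D,D_n)$ and $n\leftarrow N$. Tracing the constants through that proof, the key step is the $\epsilon$-net/$L^p$-empirical-process balancing via Lemma~\ref{lemma:p+q}, which produces a bound of the order $L^{Mp/(p+M)}\bigl(\max(D,D_n)N^{-p/2}\bigr)^{p/(p+M)}$ in the $L^p$ norm; extracting a $p$-th root and applying Jensen's inequality to pass from the conditional $L^p$ norm to the conditional expectation gives exactly the announced rate $C\bigl(L^K\max(D,D_n)N^{-p/2}\bigr)^{1/(p+K)}$.

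The main subtlety, which I expect to be the only real obstacle, is in making the Lipschitz constant compatible between the two settings: Lemma~\ref{lem:wasserstein-mixture} produces an $x$-dependent constant $\sqrt{M}\max_m m_1(\hat F_{n,x}^m)$, whereas Assumption~\ref{ass:regularity} asks for a deterministic constant valid uniformly in $x$. The reading consistent with the theorem's definition of $L$ is to interpret $\max_m m_1(\hat F_n^m)$ as $\operatorname*{ess\,sup}_x \max_m m_1(\hat F_{n,x}^m)$; this is automatic for the examples of interest (KNN, DRF) where $m_1(\hat F_{n,x}^m)\leq \max_{1\leq i\leq n}|Y_i|$ uniformly in $x$. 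Alternatively one can relax the proof of Theorem~\ref{thm:beyond-subg-estimation-error} to accept a random Lipschitz constant with controlled $L^p$ norm, at the cost of an additional H\"older step in the chaining argument but no change in the final exponent.
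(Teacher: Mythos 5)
Your proposal is correct and follows essentially the same route as the paper, which derives Theorem~\ref{thm:beyond-subgauss-convex-aggregation} by applying Theorem~\ref{thm:beyond-subg-estimation-error} with $\Theta$ replaced by $\Lambda_M$ (so $K\leftarrow M$, $R\leftarrow 1$, $n\leftarrow N$), the Lipschitz constant from Lemma~\ref{lem:wasserstein-mixture}, and $D\leftarrow\max(D,D_n)$, exactly as Theorem~\ref{thm:regret-model-aggregation} was obtained from Theorem~\ref{thm:estimation-error}. You in fact supply more detail than the paper does -- notably the Jensen step verifying Assumption~\ref{ass:moment} for the mixture and the observation that the Lipschitz constant produced by Lemma~\ref{lem:wasserstein-mixture} is $x$-dependent and should be read as a supremum over $x$ -- both of which are legitimate points the paper leaves implicit.
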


\begin{proof}[Proof of Theorem~\ref{thm:beyond-subgauss-convex-aggregation}] The proof follows by an application of Theorem~\ref{thm:beyond-subg-estimation-error} exactly in the same way that Theorem~\ref{thm:regret-model-aggregation} was derived from Theorem~\ref{thm:estimation-error}.   
\end{proof}

\section{Proofs for Section~\ref{sec:examples}}

The following Definition and Lemma about location/scale families are useful for the proof of Section~\ref{sec:examples}.

\begin{definition}
Let $F$ be a probability distribution on $\mathbb{R}$ with zero mean and unit variance. The associated location-scale family is defined as $\{F_{m,\sigma},\; m\in \mathbb{R},\; \sigma >0\}$ where $F_{m,\sigma}$ is the law of $m+\sigma Z$ with $Z\sim F$.
\end{definition}

\begin{proposition}[\citealt{Chhachhi_Teng23}]\label{prop:W1-location-scale-family}
    For all $m_1,m_2\in\mathbb{R}$ and $\sigma_1,\sigma_2>0$, we have
    \begin{equation*}
        W_{1}( F_{m_1,\sigma_1},  F_{m_2,\sigma_2}) \leq|m_1-m_2| +m_1(F)\,|\sigma_1-\sigma_2|.
    \end{equation*}
\end{proposition}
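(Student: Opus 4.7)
The plan is to exploit the natural monotone coupling that a common underlying variable provides for location-scale families. The argument is short, so the real work is just picking the cleanest representation of $W_1$ and applying the triangle inequality.

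First I would recall the definition: if $Z \sim F$, then by definition $m_1 + \sigma_1 Z$ has law $F_{m_1,\sigma_1}$ and $m_2 + \sigma_2 Z$ has law $F_{m_2,\sigma_2}$. Taking both constructions with the \emph{same} $Z$ yields a valid coupling of $F_{m_1,\sigma_1}$ and $F_{m_2,\sigma_2}$ on a common probability space. Since $W_1$ is the infimum of $\mathbb{E}|U-V|$ over all couplings $(U,V)$ with the prescribed marginals, this coupling gives the upper bound
\begin{equation*}
W_1(F_{m_1,\sigma_1}, F_{m_2,\sigma_2}) \leq \mathbb{E}\bigl|(m_1+\sigma_1 Z) - (m_2+\sigma_2 Z)\bigr| = \mathbb{E}\bigl|(m_1-m_2) + (\sigma_1-\sigma_2)Z\bigr|.
\end{equation*}

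Then I would apply the triangle inequality inside the expectation and pull the deterministic quantities out:
\begin{equation*}
\mathbb{E}\bigl|(m_1-m_2) + (\sigma_1-\sigma_2)Z\bigr| \leq |m_1-m_2| + |\sigma_1-\sigma_2|\,\mathbb{E}|Z| = |m_1-m_2| + m_1(F)\,|\sigma_1-\sigma_2|,
\end{equation*}
using that $\mathbb{E}|Z|$ is precisely the first absolute moment $m_1(F)$ of $F$. Combining the two displays gives the claim.

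There is no real obstacle here: the only thing to watch is the choice of coupling (using the \emph{same} $Z$ for both marginals), which is what makes the bound tight up to the triangle inequality. As a sanity check, the same inequality can be derived via the one-dimensional quantile representation $W_1(\mu,\nu)=\int_0^1 |Q_\mu(u)-Q_\nu(u)|\,du$: the quantile of $F_{m,\sigma}$ is $m+\sigma Q_F(u)$, and the same triangle-inequality step, now with $\int_0^1 |Q_F(u)|\,du = \mathbb{E}|Z| = m_1(F)$, produces the bound. I would present the coupling proof since it is more transparent and avoids any measurability or monotonicity discussion of quantile functions.
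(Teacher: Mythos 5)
Your proof is correct. Note that the paper does not actually prove this proposition: it is stated with a citation to Chhachhi and Teng (2023) and used as an imported result, so there is no in-paper argument to compare against. Your coupling argument is the standard self-contained derivation: taking $U=m_1+\sigma_1 Z$ and $V=m_2+\sigma_2 Z$ with the same $Z\sim F$ is a valid coupling, the infimum characterization of $W_1$ gives $W_1(F_{m_1,\sigma_1},F_{m_2,\sigma_2})\leq \mathbb{E}|(m_1-m_2)+(\sigma_1-\sigma_2)Z|$, and the triangle inequality together with $\mathbb{E}|Z|=m_1(F)$ finishes the job. In fact, since $\sigma_1,\sigma_2>0$ both maps $z\mapsto m_i+\sigma_i z$ are increasing, so your coupling is the monotone (optimal) one in dimension one and coincides with the quantile representation you mention as a sanity check; the only slack in the stated bound comes from the triangle inequality itself. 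The one cosmetic caution is the notational clash between the location parameter $m_1$ and the absolute-moment functional $m_1(F)$, which is inherited from the paper and handled correctly in your write-up.
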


\begin{proof}[Proof of Proposition~\ref{prop:example-DRN}]
 Denote by $F_{m(x;\theta),\sigma(x;\theta)}$ the output distribution of the DRN with parameter $\theta\in\Theta$ and input $x$, where $m(x;\theta)$ and $\sigma(x;\theta)$ are the location and scale parameters given by Eq.~\eqref{eq:drn-explicit}. Because  the activation function $g$ is assumed Lipschitz continuous, when $x$ and $\theta$ remain in bounded sets, the functions $\theta\mapsto m(x;\theta)$ and $\theta\mapsto \sigma(x;\theta)$ are Lipschitz continuous with a Lipschitz constant bounded by some constant $C$ uniformly in $x$.  Then, Proposition~\ref{prop:W1-location-scale-family} implies that the output distributions satisfy
 \begin{align*}
             W_{1}( F_{m(x;\theta_1),\sigma(x;\theta_1)},  F_{m(x;\theta_2),\sigma(x;\theta_2)}) &\leq |m(x;\theta_1)-m(x;\theta_2)| +m_1(F)\,|\sigma(x;\theta_1)-\sigma(x;\theta_2)|\\
             &\leq C(1+m_1(F))\| \theta_1-\theta_2\|.
 \end{align*}
for all $\theta$ in $\Theta$ compact and $x$ in the support of $X$ bounded. It follows that Assumption~\ref{ass:regularity} is satisfied.
Furthermore, this estimation of the Wasserstein distance implies that the absolute moment $m_1(F_{\theta,X})$ remains bounded for all $\theta\in\Theta$ and $x$ in the support of $X$. Hence $m_1(F_{\theta,X})$ is sub-Gaussian (uniformly in $\theta$) and this proves Assumption~\ref{ass:subgaussian}. For the same reason, the absolute moment $m_1(\hat F_n)=m_1(F_{\hat \theta_n,X})$ remains bounded and this implies Assumption~\ref{ass:subgaussian2} with $\beta(\mathcal{D}_n)$ constant not depending on the training set.
\end{proof}

\begin{proof}[Proof of Proposition~\ref{prop:example-DRF}]
The two models KNN and DRF can be written in the form
 \[
\hat{F}_{n,x}(y)=\sum_{i=1}^{n}w_{ni}(x)\mathds{1}_{\{Y_i\leq y \}}
\]
for suitable probability weights $(w_{ni}(x))_{1\leq i\leq n}$. Hence the absolute moment of the predictive distribution satisfies
\[
m_1(\hat{F}_{n,x})= \sum_{i=1}^{n}w_{ni}(x)|Y_i|\leq \max_{1\leq i\leq n}|Y_i|.
\]
We can see that the random variable $m_1(\hat{F}_{n,X})$ remains bounded and is hence $\beta_n$-sub-Gaussian with $\beta_n=\max_{1\leq i\leq n}|Y_i|$, which proves Assumption~\ref{ass:subgaussian2}.
\end{proof}

\section{Additional numerical results}\label{app:additionnal-numerical-results}
The numerical results for the \texttt{airfoil} data is presented in Figure~\ref{fig:airfoil-val} and Table~\ref{tab:airfoil}. The left plot of Figure~\ref{fig:airfoil-val} corresponds to KNN, and the middle plot to DRF. In both instances, the validation curve exhibits a clear minimum, enabling the selection parameter values $\widehat{\texttt{k}}=3$ and $\widehat{\texttt{mtry}}=5$ associated with CRPS-error of $2.16$ and $1.47$ on the validation set respectively. Then the models 
KNN, DRF, MS and CA are tested on the test set, and the right plot of Figure~\ref{fig:airfoil-val} shows the test error evaluated with 100 repetitions. The boxplots reveal a clear difference between the test errors for KNN and DRF in favor of DRF. In this case, MS achieves the same performance as DRF (in fact DRF is systematically selected), and a slight improve is achieved with CA. These results are confirmed by the numerical values in Table~\ref{tab:airfoil}.
\begin{figure}[ht]
 \centering
 \subfloat{\includegraphics[scale=0.26]{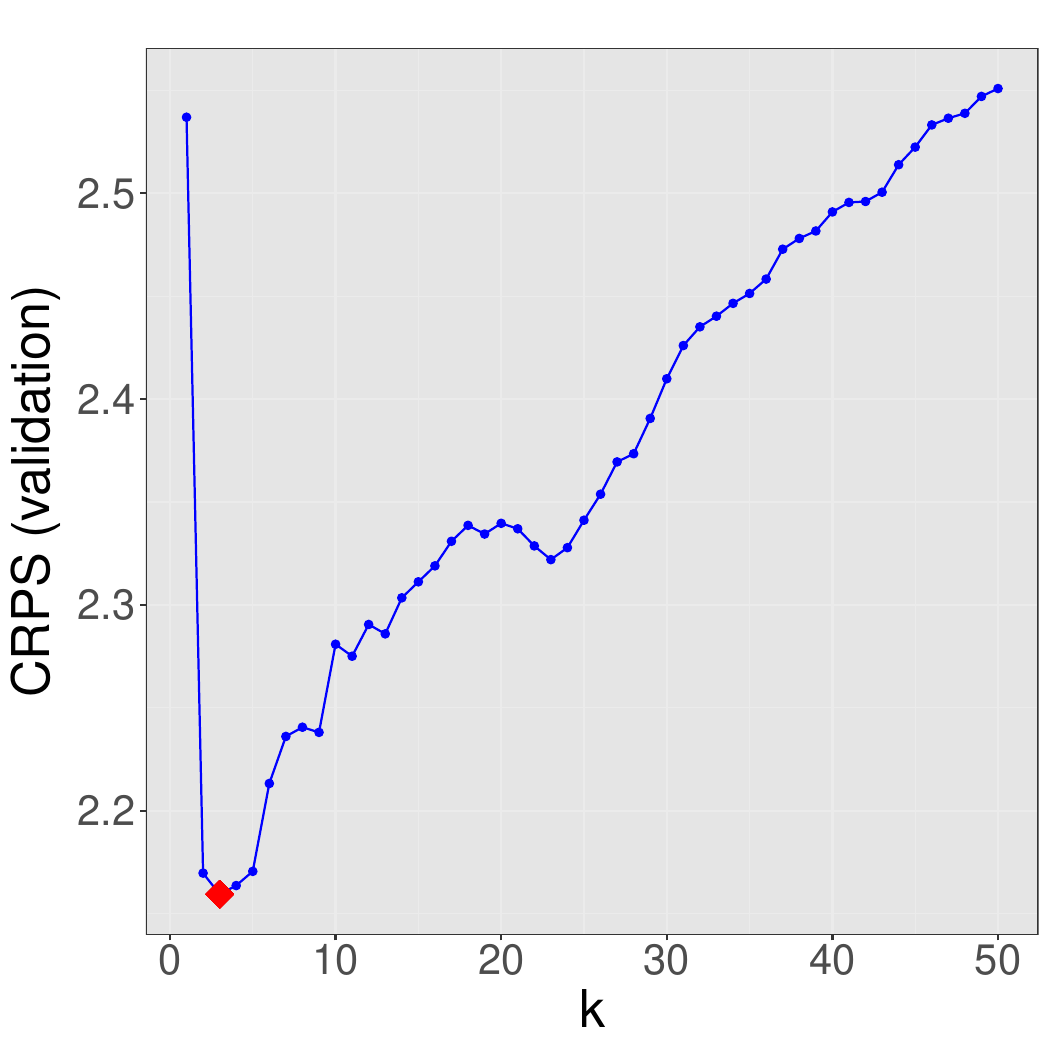}}%
 \subfloat{\includegraphics[scale=0.26]{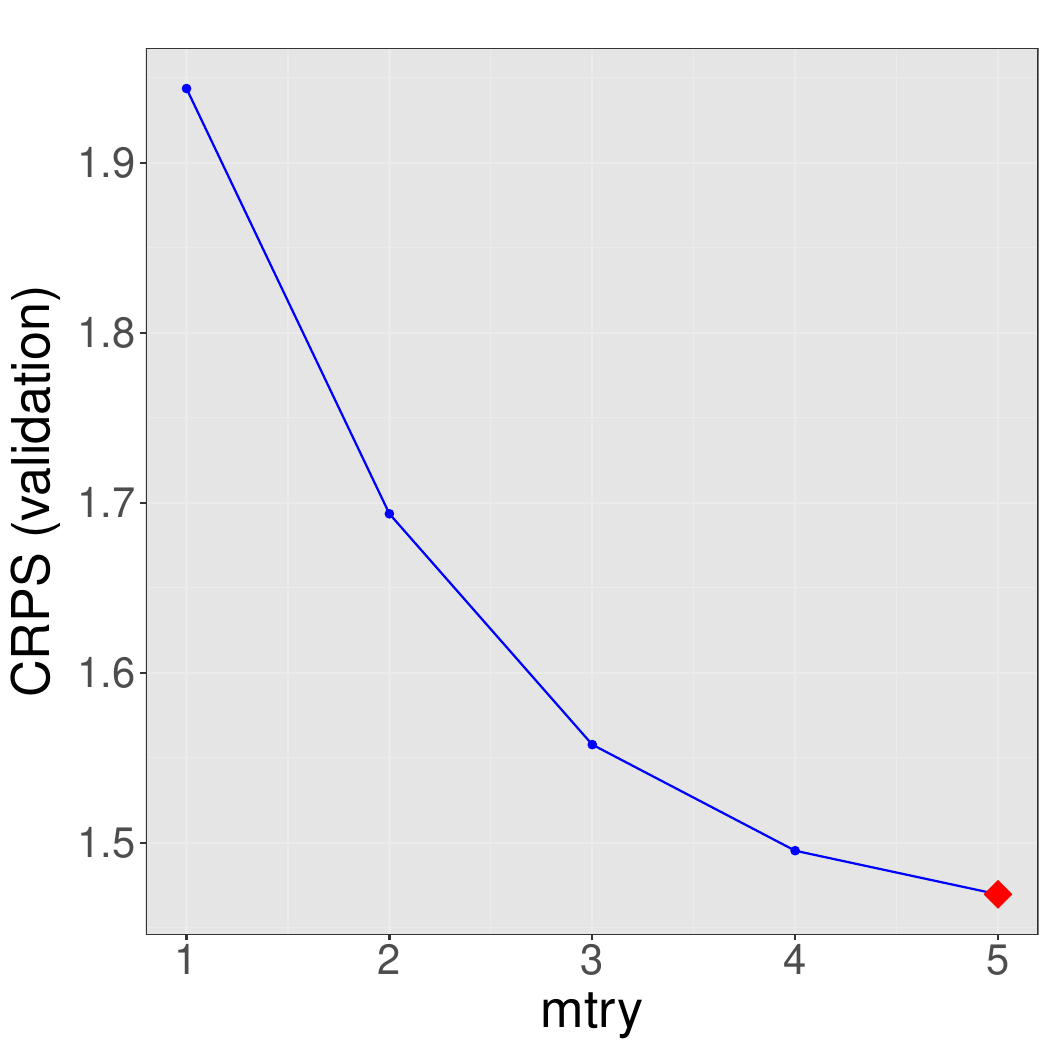}}
  \subfloat{\includegraphics[scale=0.26]{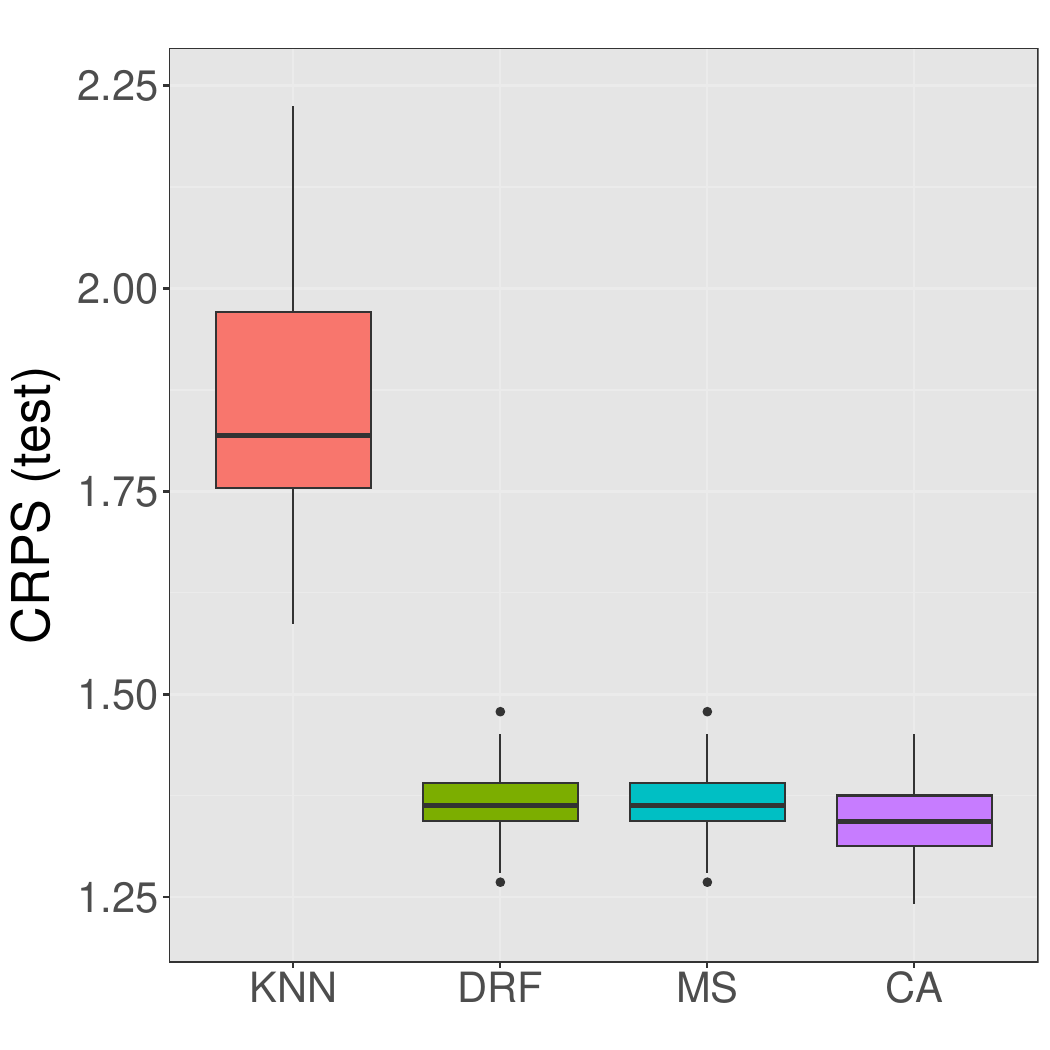}}
 \caption{\texttt{airfoil} data. Left and middle: selection  of \texttt{k} for the KNN algorithm and of \texttt{mtry} for the DRF algorithm by minimization of the validation error. Right: test error evaluated with 100 repetitions for KNN, DRF, model selection (MS) and convex aggregation (CA).}
 \label{fig:airfoil-val}
\end{figure}

\begin{table}[h!]
    \centering
    \begin{tabular}{c|c|c|c|}
       KNN  & DRF & MS & CA \\
       $1.864$ \small{($0.015$)}  & $1.366$ \small{($0.004$)} & $1.366$ \small{($0.004$)} & $1.344$ \small{($0.005$)}
    \end{tabular}
    \medskip
    \caption{\texttt{airfoil} data. Mean of the test CRPS and its standard error (in parenthesis) over 100 repetitions.}
    \label{tab:airfoil}
\end{table}


\end{document}